\documentclass[12pt,times]{elsarticle}
\usepackage[colorlinks,linkcolor=blue,citecolor=blue]{hyperref}
\usepackage{bookmark}
\usepackage{bm}
\usepackage{amsfonts,amsmath,amssymb,amsthm,cases}
\usepackage{algorithm}      % 提供 algorithm 浮动环境
\usepackage{algpseudocode}  % 提供 algorithmic 排版命令（含 \While、\State 等）
\usepackage{multirow}

\usepackage{longtable}
\usepackage{fullpage}
\usepackage{cleveref}
\usepackage{mathrsfs}
\usepackage{epstopdf}

\usepackage{graphics,graphicx,subfigure}
\usepackage{color,natbib}
\graphicspath{{Figures/}}
\usepackage[section]{placeins}

\newtheorem{theorem}{Theorem}[section]
\newtheorem{lemma}{Lemma}[section]

\newtheorem{remark}{Remark}[section]

\numberwithin{equation}{section}

\renewcommand{\div}{\operatorname{div}}
\newcommand{\curl}{\mathbf{curl}\,}

\newcommand{\T}{\bar{\partial}_t}
\begin{document}
\begin{frontmatter}
	\title{A locking-free virtual element method for the three-dimensional quasi-static electroporoelasticity equations} %\tnoteref{mytitlenote}
	%\tnotetext[mytitlenote]{This work was partially supported by
	%	       National Natural Science Foundation of China under grant number xxxxxxxx.}
	\cortext[mycorrespondingauthor]{Corresponding author}
	\author[sjtu]{Jianguo Huang} %\fnref{hjgfootnote}
	\ead{jghuang@sjtu.edu.cn}
    %\fntext[hjgfootnote]{}
	\address[sjtu]{School of Mathematical Sciences, and MOE-LSC,
		Shanghai Jiao Tong University, Shanghai 200240, China}
	\author[sjtu]{Wenxuan Wang\corref{mycorrespondingauthor}}
	\ead{wangwenxuan@sjtu.edu.cn}
	
	\begin{abstract}
	We develop and analyze a locking-free virtual element method for the three-dimensional quasi-static electroporoelasticity equations. The pressure, electric, magnetic, and displacement fields are discretized by the lowest-order nodal, edge, face, and Stokes-like virtual element spaces, respectively, so that the scheme works on general polyhedral meshes, while the temporal discretization is carried out by a backward Euler scheme. The locking phenomenon is overcome by defining the Stokes-like space on a refined mesh $\widetilde{\mathcal{T}}_h$, on which we can construct an interpolation operator $\mathrm{I}_s$ satisfying $\div(\mathrm{I}_s\bm v)|_K=\Pi^K_0\div\bm v$, which renders the error estimates independent of the Lam\'e constant $\lambda$. Moreover, since the exact sequence $\curl\bm V^e_h=\bm V^f_{h,0}$ holds, the magnetic field is updated explicitly, reducing the four-field system to a three-field one and lowering the computational cost. Numerical experiments are provided to demonstrate the performance of the proposed method.
	\end{abstract}

	\begin{keyword}
		Quasi-static electroporoelasticity equations, virtual element method, locking-free, polyhedral meshes.
	\end{keyword}		

\end{frontmatter}

\section{Introduction}

Porous materials are widely encountered in engineering systems such as subsurface reservoirs, biomedical implants, and advanced composite structures, where fluid transport and macroscopic response are strongly influenced by the complex topology of internal pores (see \cite{Bear1972,Dullien1991}). When such porous media are subjected to external electromagnetic excitation, the induced electric and magnetic fields interact with the pore-filling fluid and the solid skeleton, giving rise to significant multi-physical coupling effects that directly impact device performance and process efficiency. The seminal work \cite{Pride1994} systematically derived the macroscopic governing equations (i.e., electroporoelasticity equations) for this coupled electrodynamic and acoustic behavior, rigorously linking Maxwell's equations \cite{Monk2003} to Biot's equations \cite{Biot1962} via electrokinetic mechanisms and providing the theoretical foundation for understanding seismoelectric phenomena in fluid-saturated porous media. Subsequently, \cite{PrideHaartsen1996} generalized this model to anisotropic porous media. Given their importance in applications, electroporoelasticity equations have attracted growing interest from researchers (see \cite{HainesPride2006,HuMeir2022,HuMeir2023,LiuZouZhang2025,LiuZouMeir2026}).

Locking phenomena have been extensively studied in linear elasticity and poroelasticity equations \cite{BabuskaSuri1992,OyarzuaRuiz2016}, where various locking-free strategies have been proposed to overcome numerical difficulties caused by near-incompressibility and extreme physical parameters. However, the electroporoelasticity equations involve additional electromagnetic field variables and exhibit much higher complexity than conventional poroelastic models. Hence, developing locking-free numerical methods for electroporoelasticity is of significant theoretical and practical value.

The virtual element method (VEM) is a recent generalization of the finite element method that allows discretizations using arbitrary polygonal or polyhedral meshes, offering exceptional flexibility in handling complex geometries and adaptive meshing while maintaining accuracy and computational efficiency (see \cite{BeiraoBrezzi2013,AhmadAlsaedi2013,BeiraoBrezzi2014}). Theoretical analysis of VEM can be found in \cite{BeiraoLovadina2017,BrennerSung2018,ChenHuang2018}. Over the past decade, VEM has been successfully applied to circumvent locking phenomena in various contexts. For linear elasticity problems, for example, VEM formulations for both compressible and nearly incompressible cases were developed in \cite{BeiraoBrezziMarini2013}. Efficient VEM formulations for compressible and incompressible finite deformation problems were proposed in \cite{WriggersReddy2017}. For poroelasticity problems, locking-free three-field VEM schemes for Biot's consolidation model were introduced in \cite{TangLiu2021}. A mixed displacement-pressure VEM for the Biot's consolidation model was proposed in \cite{WangCai2022} to avoid volumetric locking and ensure stable approximations. VEM has also been extended to electromagnetic problems. For instance, H(div) and H(curl)-conforming VEMs on general polygonal and polyhedral elements were constructed in \cite{BeiraoBrezziMarini2016}. The lowest-order VEM for magnetostatic problems was studied in \cite{BeiraoBrezziDassi2018}. In addition, a VEM discretization for time-dependent Maxwell's equations was developed in \cite{BeiraoDassi2022}.

In this paper, we propose and analyze a locking-free virtual element method for the three-dimensional quasi-static electroporoelasticity equations. The pressure, electric, magnetic, and displacement fields are discretized by the lowest-order nodal, edge, face, and Stokes-like virtual elements, respectively, so that the resulting scheme is well defined on general polyhedral meshes, and a backward Euler scheme is employed for the temporal discretization. In particular, the Stokes-like virtual element space for the displacement field is built on a refined mesh $\widetilde{\mathcal{T}}_h$, which allows us to define an interpolation operator $\mathrm{I}_s$ satisfying $\div(\mathrm{I}_s\bm v)|_K = \Pi^K_0\div\bm v$; it is this discrete divergence property that enables the method to overcome the locking phenomenon. Thanks to the exact sequence property $\curl\bm V^e_h=\bm V^f_{h,0}$ (see Remark \ref{remark-exact-sequence}), the magnetic field can be updated explicitly through \eqref{Hn=Hn-1-En}, so that the four-field system is reduced to a three-field one for $(\bm E^n_h,\bm u^n_h,p^n_h)$ at each time level, which noticeably lowers the computational cost. Moreover, since $\div(\mathrm{I}_f\bm H_0)|_K = \Pi^K_0\div\bm H_0 = 0$ and $\div(\curl\bm E_h)=0$ for all $\bm E_h\in\bm V^e_h$, the fully discrete solution satisfies $\div\bm H^n_h = 0$ for all $1\leq n\leq N$; in other words, Gauss's law for magnetism is inherited exactly by the discrete scheme. Under the assumptions \textbf{A1}--\textbf{A3}, we establish a stability bound and the error estimate $O(\tau+h)$, both with constants independent of the Lam\'e constant $\lambda$, which rigorously justifies the locking-free property of the method. Numerical experiments on CVT, prism, and cube meshes are provided to validate the theoretical results.

Throughout the paper, we use the standard notations for Sobolev spaces and their norms and seminorms (see \cite{Adams1975}). For any open subset $K\subseteq\Omega$, denote by $|K|$ the measure of $K$, and denote by $\mathbb{P}_m(K)$ the set of all polynomials over $K$ with the total degree no more than $m$, with the convention that $\mathbb{P}_{-1}(K)=\{0\}$. For the vector-valued counterparts of $H^m(K)$ and $\mathbb{P}_m(K)$, we respectively write them as $H^m(K,\mathbb{R}^3)$ and $\mathbb{P}_m(K,\mathbb{R}^3)$ while using the same notations for their norms and seminorms as in the scalar case. As usual, $(\cdot,\cdot)_K$ indicates the $L^2$-inner product over $K$. Given any two quantities $a$ and $b$, the symbol ``$a\lesssim b$'' abbreviates ``$a\leq Cb$'' with the hidden positive constant $C$ independent of the mesh size $h$, the time step $\tau$, and the Lam\'e constant $\lambda$. Moreover, ``$a\eqsim b$'' is equivalent to ``$a\lesssim b\lesssim a$''.

\section{The continuous problem}
Let $T>0$ and let $\Omega\subset\mathbb{R}^3$ be a convex polyhedron with Lipschitz boundary $\partial\Omega$. We consider the following quasi-static electroporoelasticity equation on $[0,T]\times\Omega$:
{\allowdisplaybreaks
\begin{align}
	&\varepsilon\partial_t\bm E + \sigma\bm E - \curl\bm H - L\nabla p = \bm j\quad\text{ in } [0,T]\times\Omega, \notag\\
	&\mu\partial_t\bm H + \curl\bm E = \bm 0\quad\text{ in } [0,T]\times\Omega, \notag\\
	&-(\lambda+G)\nabla(\div\bm u) - G\Delta\bm u + \alpha\nabla p = \bm f\quad\text{ in } [0,T]\times\Omega, \notag\\
	&\partial_t(c_0p+\alpha\div\bm u) - \kappa\Delta p + L\div\bm E = g\quad\text{ in } [0,T]\times\Omega, \notag\\
	&\bm E(0,\bm x) = \bm E_0(\bm x),~\bm H(0,\bm x) = \bm H_0(\bm x),~\div\bm H(0,\bm x)=0,~\bm u(0,\bm x) = \bm u_0(\bm x),~p(0,\bm x) = p_0(\bm x)\quad\text{ in } \Omega, \notag\\
	&\bm E(t,\bm x)\times\bm n = \bm 0,~\bm H(t,\bm x)\cdot\bm n = 0,~\bm u(t,\bm x) = \bm 0,~p(t,\bm x) = 0\quad\text{ on } [0,T]\times\partial\Omega.\label{continuous-problem}
\end{align}}
Here, $\bm E$ represents the electric field, $\bm H$ the magnetic field, $\bm u$ the displacement of the solid matrix, and $p$ the pressure within the fluid. The parameter $\varepsilon$ denotes the electric permittivity, $\sigma$ is the electrical conductivity, and $\mu$ represents the magnetic permeability. $L$ is the electrokinetic coupling coefficient. $\alpha \in (0,1]$ is the Biot-Willis coefficient, while $\lambda \in (0,\infty)$ and $G$ stand for the Lam\'e constant and the shear modulus of the porous medium, respectively. Furthermore, $c_0 = 1/M_b$, where $M_b$ is the Biot modulus, and $\kappa = \gamma/\eta$, with $\gamma$ being the permeability and $\eta$ the shear viscosity of the fluid. To simplify the analysis, we assume that $\bm f=\bm 0$ and $\varepsilon$, $\sigma$, $\mu$, $L$, $\alpha$, $\lambda$, $G$, $c_0$, and $\kappa$ are all fixed positive constants. Taking the divergence of the second equation in \eqref{continuous-problem}, we deduce that $\div\bm H(t) = 0~\forall\,t\in[0,T]$. We introduce the function spaces
\begin{align*}
	H(\curl,\Omega) &= \Big\{\bm v\in L^2(\Omega,\mathbb{R}^3):~\curl\bm v\in L^2(\Omega,\mathbb{R}^3)\Big\},	\\
	H_0(\curl,\Omega) &= \Big\{\bm v\in H(\curl,\Omega):~\bm v\times\bm n = \bm 0\text{ on }\partial\Omega\Big\},	\\
	H(\div,\Omega) &= \Big\{\bm v\in L^2(\Omega,\mathbb{R}^3):~\div\bm v\in L^2(\Omega)\Big\},	\\
	H_0(\div,\Omega) &= \Big\{\bm v\in H(\div,\Omega):~\bm v\cdot\bm n = 0\text{ on }\partial\Omega\Big\},
\end{align*}
where $\bm n$ denotes the unit outward normal vector to $\partial\Omega$.

Let $S = H_0(\curl,\Omega)\times H(\curl,\Omega)\times H^1_0(\Omega,\mathbb{R}^3)\times H^1_0(\Omega)$ and $M = L^2(\Omega,\mathbb{R}^3)\times L^2(\Omega,\mathbb{R}^3)\times L^2(\Omega,\mathbb{R}^3)\times L^2(\Omega)$. The variational formulation of \eqref{continuous-problem} is: find $(\bm E,\bm H,\bm u, p)\in L^2(0,T;S)\cap H^1(0,T;M)$ such that for all $(\bm D,\bm B,\bm v,q)\in S$, we have
\begin{align}
	&(\varepsilon\partial_t\bm E(t),\bm D) + (\sigma\bm E(t),\bm D) - (\bm H(t),\curl\bm D) - (L\nabla p(t),\bm D) = (\bm j(t),\bm D), \notag\\
	&(\mu\partial_t\bm H(t),\bm B) + (\curl\bm E(t),\bm B) = 0, \notag\\
	&((\lambda+G)\div\bm u(t),\div\bm v) + (G\nabla\bm u(t),\nabla\bm v) - (p(t),\alpha\div\bm v) = 0, \notag\\
	&(c_0\partial_tp(t),q) + (\alpha\partial_t\div\bm u(t),q) + (\kappa\nabla p(t),\nabla q) - (L\bm E(t),\nabla q) = (g(t),q).\label{variational-formulation}
\end{align}

We assume that
\begin{itemize}\emergencystretch=1em
	\item[\textbf{A1.}] $0<L<\sqrt{\sigma\kappa}$,
	\item[\textbf{A2.}] $\bm j\in H^1(0,T;L^2(\Omega,\mathbb{R}^3))$, $g\in H^1(0,T;L^2(\Omega))$, $\bm E_0\in H_0(\curl,\Omega)\cap H(\div,\Omega)$, $\bm H_0\in H(\curl,\Omega)$, $\bm u_0\in H^1_0(\Omega,\mathbb{R}^3)$, and $p_0\in H^1_0(\Omega)\cap H^2(\Omega)$.
\end{itemize}
If necessary, we further assume that
\begin{itemize}
	\item[\textbf{A3.}] The functions $\bm E_0$, $\bm H_0$, $\bm u_0$, $p_0$, $\bm j$, and $g$ are sufficiently smooth.
\end{itemize}
Under the assumptions \textbf{A1} and \textbf{A2}, the variational formulation \eqref{variational-formulation} admits a unique solution $(\bm E,\bm H,\bm u,p)$ in $L^2(0,T;S)\cap H^1(0,T;M)$, and the following regularity estimates hold (see \cite{LiuZou2026}).
\begin{lemma}\label{lemma-regularity-1}
	Under the assumptions \textbf{A1} and \textbf{A2}, it holds
	\begin{equation*}
		G\|\bm u(t)\|_2 + (\lambda + G)\|\div\bm u(t)\|_1 \lesssim 1\quad \forall\,t\in [0,T],
	\end{equation*}
	where $\bm u$ is the displacement solution of \eqref{variational-formulation}.
\end{lemma}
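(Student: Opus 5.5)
The plan is to treat, at each fixed $t\in[0,T]$, the third equation of \eqref{variational-formulation} as a stationary linear elasticity problem with $\bm u(t)\in H^1_0(\Omega,\mathbb{R}^3)$ and right-hand side $-\alpha\nabla p(t)$. We then apply the $\lambda$-robust $H^2$ regularity available on convex polyhedra, and bound $\|p(t)\|_1$ uniformly in $\lambda$ through an energy estimate for the coupled system. The argument therefore has two ingredients, an elliptic regularity step and an a priori energy step, carried out in that order of dependence.

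\textbf{Step 1 (elliptic regularity).} Introduce the auxiliary pressure $\xi=(\lambda+G)\div\bm u-\alpha p$. The third equation of \eqref{variational-formulation} then reads $-G\Delta\bm u+\nabla\xi=\bm 0$ and $\div\bm u=(\xi+\alpha p)/(\lambda+G)$, a perturbed Stokes system. On a convex polyhedron, the $\lambda$-uniform regularity of Brenner--Sung / Vogelius type for nearly incompressible elasticity gives
\begin{equation*}
G\|\bm u\|_2+(\lambda+G)\|\div\bm u\|_1\lesssim \alpha\|\nabla p\|_{0}+\alpha\|p\|_1\lesssim\|p(t)\|_1 .
\end{equation*}
The main point is that the constant depends only on $\Omega$, which holds because the Stokes $H^2\times H^1$ regularity on convex polyhedra is known. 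The bound for $(\lambda+G)\|\div\bm u\|_1$ follows from $\xi+\alpha p$ together with $\|\xi\|_1\lesssim\|\bm u\|_2$-type Stokes estimates.

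\textbf{Step 2 (uniform bound on $\|p(t)\|_1$).} We first differentiate the system in time, which is justified by \textbf{A2}. We then test with $(\partial_t\bm E,\partial_t\bm H,\partial_t\bm u,\partial_t p)$, or with $(\bm E,\bm H,\partial_t\bm u,p)$ for the basic level. Adding the equations cancels the curl terms, and the $\alpha$-coupling cancels once the time derivative of the third equation is tested with $\partial_t\bm u$. The electrokinetic terms $-(L\nabla p,\bm E)-(L\bm E,\nabla q)$ with $q=p$ give $-2L(\bm E,\nabla p)$, which is absorbed by $\sigma\|\bm E\|^2+\kappa\|\nabla p\|^2$ thanks to \textbf{A1} ($L^2<\sigma\kappa$). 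The result is control of $\|p\|_{L^\infty(0,T;L^2)}$ and $\|\nabla p\|_{L^2(0,T;L^2)}$. To obtain the pointwise bound on $\|\nabla p(t)\|$, we next test the fourth equation with $\partial_t p$, the first with $\bm E$, and so on. Equivalently, we repeat the energy argument for the time-differentiated system, which yields $\partial_t p\in L^2(0,T;L^2)$ and $\nabla p\in L^\infty(0,T;L^2)$. All constants are independent of $\lambda$, because the $\lambda$-term only appears as the nonnegative energy $(\lambda+G)\|\div\bm u\|^2$. The initial data for the differentiated problem, for example $\partial_t p(0)$, are controlled via the equations using $p_0\in H^2$, $\bm E_0\in H(\div)$ and the initial compatibility.

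\textbf{Main obstacle.} The delicate step is Step 2. We need $\|p(t)\|_1$ uniformly in $t$ and $\lambda$, which requires bounding the initial value of $\alpha\partial_t\div\bm u$ independently of $\lambda$. The plan is to use the third equation at $t=0$ to express $\div\bm u_0$ through $p_0$. We then rely on the regularity $p_0\in H^1_0\cap H^2$ and the estimate from Step 1, so that the initial energy of the differentiated system is bounded by the data alone. Alternatively, one may cite \cite{LiuZou2026} directly for this uniform bound. Combining Steps 1 and 2 gives the lemma.
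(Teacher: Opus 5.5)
The paper gives no proof of this lemma; it quotes it, together with well-posedness, from \cite{LiuZou2026}. Your route is a sound self-contained alternative, modulo the imported 3D elliptic regularity and a Galerkin justification of the time differentiation. It first uses $\lambda$-robust $H^2$ regularity of the Lam\'e system at each fixed $t$, which reduces the claim to a $\lambda$-uniform bound on $\|p(t)\|_1$. It then gets that bound from an energy estimate for the time-differentiated coupled system. This has the merit of showing where \textbf{A1} is used (absorbing $-2L(\partial_t\bm E,\nabla\partial_t p)$) and why each item of \textbf{A2} is needed. (With your $\xi$ the momentum equation is $-G\Delta\bm u-\nabla\xi=\bm 0$; the sign is immaterial.) However, the step you call the main obstacle is resolved incorrectly, and one claimed equivalence in Step 2 is false.

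First, the initial energy of the differentiated system is
\[
\varepsilon\|\partial_t\bm E(0)\|_0^2+\mu\|\partial_t\bm H(0)\|_0^2+c_0\|\partial_t p(0)\|_0^2+(\lambda+G)\|\div\partial_t\bm u(0)\|_0^2+G|\partial_t\bm u(0)|_1^2 .
\]
The quantity $\div\bm u_0$ does not appear in it. It enters only the basic energy, where testing the elasticity equation at $t=0$ with $\bm u_0$ already gives $(\lambda+G)\|\div\bm u_0\|_0^2\le\alpha^2\|p_0\|_0^2/(\lambda+G)$. Applying Step 1 to $\partial_t\bm u(0)$ would need $\|\partial_t p(0)\|_1$, which \textbf{A2} does not control. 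What works is the coupled stationary problem at $t=0$. Set $F:=g(0)+\kappa\Delta p_0-L\div\bm E_0\in L^2(\Omega)$; this is exactly where $p_0\in H^2(\Omega)$ and $\bm E_0\in H(\div,\Omega)$ enter. The pair $(\partial_t p(0),\partial_t\bm u(0))$ then satisfies $c_0\partial_t p(0)+\alpha\div\partial_t\bm u(0)=F$ together with $(\lambda+G)(\div\partial_t\bm u(0),\div\bm v)+G(\nabla\partial_t\bm u(0),\nabla\bm v)=(\partial_t p(0),\alpha\div\bm v)$ for all $\bm v\in H^1_0(\Omega,\mathbb{R}^3)$. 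Testing with $\partial_t p(0)$ and $\bm v=\partial_t\bm u(0)$ and adding gives
\[
c_0\|\partial_t p(0)\|_0^2+(\lambda+G)\|\div\partial_t\bm u(0)\|_0^2+G|\partial_t\bm u(0)|_1^2=(F,\partial_t p(0))\le c_0^{-1}\|F\|_0^2 ,
\]
uniformly in $\lambda$. The electromagnetic part follows from $\varepsilon\partial_t\bm E(0)=\bm j(0)-\sigma\bm E_0+\curl\bm H_0+L\nabla p_0$ and $\mu\partial_t\bm H(0)=-\curl\bm E_0$, which is where $\bm H_0\in H(\curl,\Omega)$ is used.

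Second, testing the undifferentiated pressure equation with $\partial_t p$ is not equivalent to the differentiated argument. It leaves the term $L(\bm E,\nabla\partial_t p)$, and integrating it by parts in time requires $\partial_t\bm E\in L^2(0,T;L^2)$, which only the differentiated Maxwell estimate provides. The version that closes tests the differentiated system with $(\partial_t\bm E,\partial_t\bm H,\partial_{tt}\bm u,\partial_t p)$. Using $\bm j,g\in H^1(0,T;L^2)$ for the sources, this yields $\partial_t p\in L^\infty(0,T;L^2)$ and $\nabla\partial_t p\in L^2(0,T;L^2)$ uniformly in $\lambda$. Then $\|\nabla p(t)\|_0\le\|\nabla p_0\|_0+\sqrt{T}\,\|\nabla\partial_t p\|_{L^2(0,T;L^2)}$ supplies the bound that Step 1 needs.
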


\begin{lemma}\label{lemma-regularity-2}
	Let $(\bm E,\bm H, \bm u, p)$ be the solution of \eqref{variational-formulation}. Under the assumptions \textbf{A1} and \textbf{A3}, for all $t\in[0,T]$, we have
	\begin{equation*}
		\begin{split}
			&\|\bm E(t)\|^2_2 + \|\bm H(t)\|^2_2 + \|\bm u(t)\|^2_2 + \|p(t)\|^2_2\lesssim 1,	\\
			&\int_{0}^{t}\|\partial_t\bm E(\theta)\|^2_2\,\mathrm{d}\theta + \int_{0}^{t}\|\partial_t\bm H(\theta)\|^2_2\,\mathrm{d}\theta + \int_{0}^{t}\|\partial_t\bm u(\theta)\|^2_2\,\mathrm{d}\theta + \int_{0}^{t}\|\partial_tp(\theta)\|^2_2\,\mathrm{d}\theta \lesssim 1,		\\
			&\int_{0}^{t}\|\partial_{tt}\bm E(\theta)\|^2_0\,\mathrm{d}\theta + \int_{0}^{t}\|\partial_{tt}\bm H(\theta)\|^2_0\,\mathrm{d}\theta + \int_{0}^{t}\|\partial_{tt}\bm u(\theta)\|^2_0\,\mathrm{d}\theta + \int_{0}^{t}\|\partial_{tt}p(\theta)\|^2_0\,\mathrm{d}\theta \lesssim 1,		\\
			&\int_{0}^{t}\|\partial_{tt}\bm u(\theta)\|^2_1\,\mathrm{d}\theta + (\lambda+G)^2\int_{0}^{t}\|\div\partial_t\bm u(\theta)\|^2_1\,\mathrm{d}\theta\lesssim  1.
		\end{split}
	\end{equation*}
\end{lemma}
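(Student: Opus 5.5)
The plan is a bootstrap through the equations, using energy estimates for the coupled system and elliptic regularity on the convex polyhedron $\Omega$; as for Lemma \ref{lemma-regularity-1}, one may also cite \cite{LiuZou2026}. The first step is a basic energy estimate. Test \eqref{variational-formulation} with $(\bm D,\bm B,\bm v,q)=(\bm E,\bm H,\partial_t\bm u,p)$. The coupling terms $(\bm H,\curl\bm E)$ and $(\curl\bm E,\bm H)$ cancel, and so do the terms $(p,\alpha\div\partial_t\bm u)$. The electrokinetic terms leave $-(L\nabla p,\bm E)-(L\bm E,\nabla p)$, which by \textbf{A1} and Young's inequality satisfy
\begin{equation*}
2L|(\nabla p,\bm E)|\le \sigma\|\bm E\|_0^2+(L^2/\sigma)\|\nabla p\|_0^2,
\end{equation*}
with $L^2/\sigma<\kappa$. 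Gronwall's inequality then gives uniform-in-time bounds for $\|\bm E\|_0$, $\|\bm H\|_0$, $\|p\|_0$, $G\|\nabla\bm u\|_0^2+(\lambda+G)\|\div\bm u\|_0^2$, and $\int_0^t\|\nabla p\|_0^2$.

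The second step repeats the argument for time derivatives. Formally differentiate the system once and twice in $t$; this is legitimate for smooth data by a Galerkin argument. Test with the corresponding derivatives, and with $\partial_t$-quantities as above. The initial values of $\partial_t\bm E$, $\partial_t\bm H$, $\partial_t p$, $\partial_t\bm u$ are read off from the equations at $t=0$. They are bounded because of \textbf{A3} and because the elasticity equation is used to express $\partial_t\bm u(0)$ through $\partial_t p(0)$. Here the $\lambda$-uniformity comes from the structure $(\lambda+G)\|\div\partial_t\bm u\|_0^2$ appearing only on the good side. This yields the $L^2$-in-space bounds of $\partial_t$ and $\partial_{tt}$ of all fields, and also $\int\|\partial_{tt}\bm u\|_1^2$.

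The third step lifts spatial regularity to $H^2$ by elliptic theory. For $p$, write $-\kappa\Delta p=g-c_0\partial_tp-\alpha\partial_t\div\bm u-L\div\bm E$, and use $H^2$ regularity of the Dirichlet Laplacian on convex $\Omega$. For $\bm u$, the elasticity equation $-(\lambda+G)\nabla\div\bm u-G\Delta\bm u=-\alpha\nabla p$ is treated by the $\lambda$-robust regularity estimate $G\|\bm u\|_2+(\lambda+G)\|\div\bm u\|_1\lesssim\|\alpha\nabla p\|_0$. This is the estimate underlying Lemma \ref{lemma-regularity-1}, valid on convex domains (Brenner--Sung type). Differentiating in time gives the bound on $(\lambda+G)\|\div\partial_t\bm u\|_1$ in terms of $\|\nabla\partial_tp\|_0$. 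For $\bm E,\bm H$, combine $\curl\bm H=\varepsilon\partial_t\bm E+\sigma\bm E-L\nabla p-\bm j$ and $\curl\bm E=-\mu\partial_t\bm H$ with control of the divergences, namely $\div\bm H=0$ and $\div\bm E$ obtained from the divergence of the first equation. Then use the embedding $H_0(\curl)\cap H(\div)\hookrightarrow H^1$ on convex domains, iterated once more after differentiating the equations in space.

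The main obstacle is the circular dependence in the $H^2$ bounds. The bound for $\|p\|_2$ needs $\|\div\partial_t\bm u\|_0$ and $\|\div\bm E\|_0$, while the bound for $\bm E$ in $H^2$ needs $\|p\|_3$-type control of $\nabla p$ in $H^1$. I would handle this by ordering the estimates in terms of time derivatives: first obtain high time-derivative $L^2$ bounds, which are cheap, then trade each time derivative for spatial regularity through the equations. I would rely on \textbf{A3} for the compatibility conditions at $t=0$ and on the $\lambda$-robust elasticity estimate to keep every constant independent of $\lambda$.
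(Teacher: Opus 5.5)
There is a genuine gap, in the $H^2$ lift for $\bm E$ and $\bm H$.

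Most of your plan is sound. In Steps 1--2 the cancellations in the energy identity work, \textbf{A1} absorbs the electrokinetic terms, the $\lambda$-uniform bound on $(\partial_t\bm u(0),\partial_t p(0))$ holds because $c_0>0$, and the differentiated energy estimates go through. The elliptic lifts for $p$ and $\bm u$ are also fine, given the $\lambda$-robust Lam\'e estimate that Lemma~\ref{lemma-regularity-1} already presupposes.

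On a convex polyhedron you do have $H_0(\curl,\Omega)\cap H(\div,\Omega)\hookrightarrow H^1$ and $H(\curl,\Omega)\cap H_0(\div,\Omega)\hookrightarrow H^1$. You cannot, however, ``iterate once more after differentiating the equations in space''. Spatial derivatives of $\bm E$ and $\bm H$ no longer satisfy $\bm E\times\bm n=\bm 0$ or $\bm H\cdot\bm n=0$ near edges and vertices, and the $H^2$ shift for these div--curl systems fails there in general.

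\begin{itemize}
\item Near an edge with dihedral angle $\omega$, $\bm E$ contains terms like $\nabla\big(r^{\pi/\omega}\sin(\pi\theta/\omega)\big)$. These come from the Dirichlet potential of $\div\bm E$, and $\bm H$ has analogous edge singularities.
\item Such terms lie in $H^s$ only for $s<\pi/\omega$, so they are in $H^2$ only when $\omega<\pi/2$. Already at $\omega=\pi/2$, logarithmic terms appear unless compatibility conditions hold.
\item Their coefficients are global functionals of the solution, for instance of $\div\bm E(t)$. That quantity is driven by $L\Delta p$ and is generically nonzero, so smoothness of $\bm E_0,\bm H_0,\bm u_0,p_0,\bm j,g$ alone does not remove them.
\end{itemize}

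Consequently your argument does not yield $\|\bm E(t)\|_2$, $\|\bm H(t)\|_2$, $\int_0^t\|\partial_t\bm E\|_2^2$ or $\int_0^t\|\partial_t\bm H\|_2^2$ on a general convex polyhedron.

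The paper gives no proof of this lemma; it imports it from \cite{LiuZou2026}. To close your route you need an extra hypothesis guaranteeing $H^2$ regularity of the two div--curl problems. Options are a smooth boundary, or restrictions on the dihedral angles together with compatibility conditions that annihilate the singular coefficients. Otherwise, cite that reference as the paper does.

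A smaller remark: the circularity you flag through $\div\bm E$ does not need $H^3$ control of $p$, which is likewise unavailable on polyhedra. Take the divergence of the first equation and insert $\kappa\Delta p$ from the pressure equation. This gives
\begin{equation*}
\varepsilon\partial_t\div\bm E+\big(\sigma-L^2/\kappa\big)\div\bm E=\frac{L}{\kappa}\big(c_0\partial_tp+\alpha\partial_t\div\bm u-g\big)+\div\bm j,
\end{equation*}
an ODE in time, with positive damping by \textbf{A1}. It bounds $\|\div\bm E(t)\|_1$ directly in terms of $\partial_t p$, $\partial_t\div\bm u$ and the data.
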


\section{Virtual element method}
\subsection{Mesh assumptions and the reconstruction of polyhedra}
Let $\{\mathcal{T}_h\}_{h>0}$ be a family of polyhedral decompositions of $\Omega$, where each polyhedron $K\in\mathcal{T}_h$ has diameter $h_K$ and $h:=\max_{K\in\mathcal{T}_h}h_K$. We denote by $\mathcal{F}_h$, $\mathcal{E}_h$ and $\mathcal{V}_h$ the sets of all faces, edges and vertices of $\mathcal{T}_h$, respectively. For each polyhedron $K\in\mathcal{T}_h$, we define
\begin{equation*}
	\mathcal{F}(K) = \{F\in\mathcal{F}_h:~F\subset\partial K\},\quad\mathcal{E}(K) = \{e\in\mathcal{E}_h:~e\subset\partial K\},\quad\mathcal{V}(K) = \{\bm\delta\in\mathcal{V}_h:~\bm\delta\in\partial K\}
\end{equation*}
as its faces, edges and vertices, and the sets $\mathcal{E}(F)$ and $\mathcal{V}(F)$ are defined in a similar way for a face $F\in\mathcal{F}(K)$. Moreover, we write $\bm x_K$ and $\bm x_F$ for the centroids of $K$ and $F$, respectively, and denote by $\bm n_F$ and $\bm t_e$ the unit normal vector to a face $F$ (pointing outward with respect to $K$ whenever $F\in\mathcal{F}(K)$) and a fixed unit tangent vector along an edge $e$, respectively. Throughout this section, we assume that the mesh satisfies the following assumption.
\begin{itemize}
	\item[$\textbf{A4}.$] For every $K\in\mathcal{T}_h$, there exist positive constants $\rho_1$, $\rho_2$ and $\rho_3$, independent of $h_K$, such that
	\begin{itemize}
		\item[$\bullet$] $K$ is star-shaped with respect to a ball of radius $\geq\rho_1h_K$;
		\item[$\bullet$] each face $F\in\mathcal{F}(K)$ is star-shaped with respect to a disk of radius $\geq\rho_2h_K$;
		\item[$\bullet$] the diameter of every face in $\mathcal{F}(K)$ is bigger than $\rho_3h_K$.
	\end{itemize}
\end{itemize}

We next recall the mesh refinement procedure introduced in \cite{HuangWang2026}, which is required in the construction of the Stokes-like virtual element space below. Starting from the original polyhedral mesh $\mathcal{T}_h$, we construct a refined mesh $\widetilde{\mathcal{T}}_h$ by introducing an additional vertex $\bm\delta_F$ at the star-shaped point of each face $F\in\mathcal{F}(K)$ for all $K\in\mathcal{T}_h$ (see Fig.~\ref{refinemesh-example}). The resulting refined polyhedra are denoted by $\widetilde{K}\in\widetilde{\mathcal{T}}_h$, and, for each $\widetilde{K}$, we write $\mathcal{F}(\widetilde{K})$ and $\mathcal{V}(\widetilde{K})$ for the sets of its faces and vertices, respectively. This refinement preserves the mesh assumption $\textbf{A4}$, and $h_{\widetilde{K}} = h_K$ for all $K\in\mathcal{T}_h$.

\begin{figure}[!htb]
	\centering
	\subfigure
	{\includegraphics[width=0.45\textwidth]{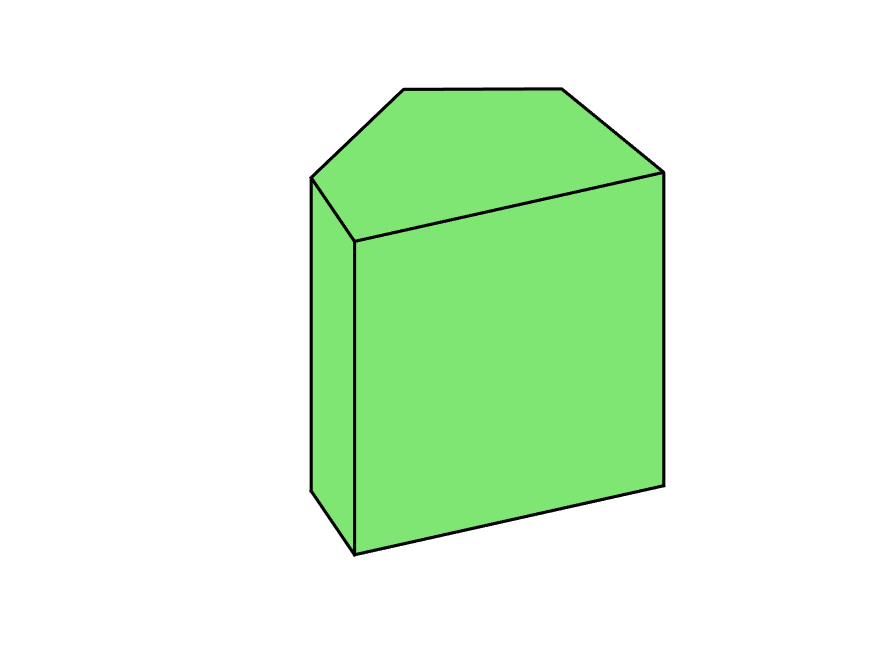}}
	\hspace{0.5cm}
	\subfigure
	{\includegraphics[width=0.45\textwidth]{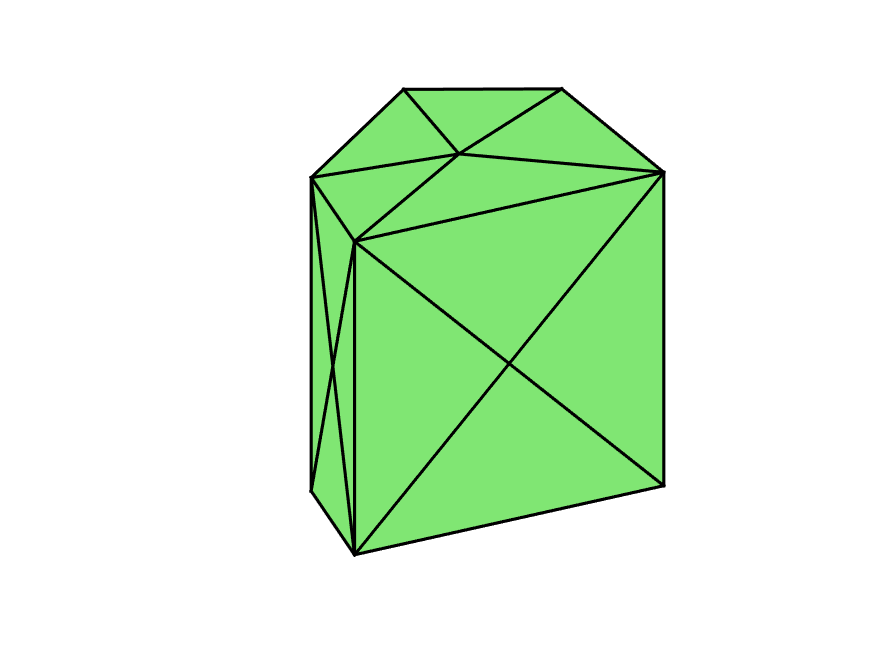}}
	\caption{The initial polyhedron $K$ (left) and the reconstructed polyhedron $\widetilde{K}$ (right).}
	\label{refinemesh-example}
\end{figure}

\subsection{Virtual element spaces}
We construct the virtual element spaces for the four physical fields following the framework of \cite{BeiraoBrezziDassi2018}, in which the lowest-order nodal, edge, and face virtual element spaces are adopted to discretize the pressure, electric, and magnetic fields, respectively. For the displacement field, we define the Stokes-like virtual element space on the refined mesh $\widetilde{\mathcal{T}}_h$ introduced in the previous subsection, following \cite{HuangWang2026}. A key advantage of working on the reconstructed mesh is that we can define an interpolation operator $\mathrm{I}_s$ satisfying $\div(\mathrm{I}_s\bm v)|_K = \Pi^K_0\div\bm v$, which enables the method to overcome the locking phenomenon.

For each $K\in\mathcal{T}_h$, we introduce the following polynomial projection operators:
\begin{itemize}
	\item $\Pi^{\nabla,K}_1:H^1(K)\rightarrow\mathbb{P}_1(K)$
	\begin{equation*}
		\begin{cases}
			\int_K \nabla\Pi^{\nabla,K}_1q\cdot\nabla m\,\mathrm{d}\bm x = \int_K \nabla q\cdot\nabla m\,\mathrm{d}\bm x~\forall m\in\mathbb{P}_1(K),	\\
			\int_{\partial K}\Pi^{\nabla,K}_1q\,\mathrm{d}S = \int_{\partial K}q\,\mathrm{d}S.
		\end{cases}
	\end{equation*}
	\item $\Pi^K_k:L^2(K)\rightarrow\mathbb{P}_k(K)$
	\begin{equation*}
		\int_K \Pi^K_kq\,m\,\mathrm{d}\bm x = \int_K q\,m\,\mathrm{d}\bm x~\forall\,m\in\mathbb{P}_k(K).
	\end{equation*}
	\item $\bm\Pi^{\nabla,K}_1:H^1(K,\mathbb{R}^3)\rightarrow\mathbb{P}_1(K,\mathbb{R}^3)$
	\begin{equation*}
		\begin{cases}
			\int_K \nabla\bm\Pi^{\nabla,K}_1\bm v:\nabla \bm m\,\mathrm{d}\bm x = \int_K \nabla\bm v:\nabla \bm m\,\mathrm{d}\bm x~\forall \bm m\in\mathbb{P}_1(K,\mathbb{R}^3),	\\
			\int_{\partial K}\bm\Pi^{\nabla,K}_1\bm v\,\mathrm{d}S = \int_{\partial K}\bm v\,\mathrm{d}S.
		\end{cases}
	\end{equation*}
	\item $\bm\Pi^K_0:L^2(K,\mathbb{R}^3)\rightarrow\mathbb{P}_0(K,\mathbb{R}^3)$
	\begin{equation*}
		\int_K \bm\Pi^K_0\bm E\,\mathrm{d}\bm x = \int_K \bm E\,\mathrm{d}\bm x.
	\end{equation*}
\end{itemize}

We first define the nodal virtual element space. On each $K\in\mathcal{T}_h$, let
\begin{equation*}
	U^r_h(K) = \Big\{q\in H^1(K):~\Delta q = 0\text{ in } K,~q|_{\partial K}\in C^0(\partial K),~q|_F\in V^r_h(F)~\forall\,F\in\mathcal{F}(K)\Big\},
\end{equation*}
where for each face $F\in\mathcal{F}(K)$,
\begin{align*}
V^r_h(F) = \Big\{q\in H^1(F)\cap C^0(F):~\Delta_Fq\in\mathbb{P}_0(F),~q|_e\in\mathbb{P}_1(e)~\forall\,e\in\mathcal{E}(F),\\
\int_F\nabla q\cdot(\bm x-\bm x_F)\,\mathrm{d}S=0\Big\}.
\end{align*}
The corresponding enhanced nodal virtual element space is defined by
\begin{align*}
	V^r_h(K) = \Big\{q\in H^1(K):~\Delta q \in\mathbb{P}_1(K)\text{ in } K,~q|_{\partial K}\in C^0(\partial K),~q|_F\in V^r_h(F)~\forall\,F\in\mathcal{F}(K),\\
	(q-\Pi^{\nabla,K}_1q,m)_K = 0~\forall\,m\in\mathbb{P}_1(K)\Big\}.
\end{align*}
According to the definition of $V^r_h(K)$,
\begin{equation*}
	\Pi^{\nabla,K}_1q = \Pi^K_1q\quad\forall\,q\in V^r_h(K).
\end{equation*}
In both $V^r_h(K)$ and $U^r_h(K)$, the DoFs consist of the pointwise values $q(\bm\delta)$ on each vertex $\bm\delta\in\mathcal{V}(K)$.
Gluing these local spaces together with a global continuity condition yields the global nodal virtual element spaces
\begin{align*}
	&U^r_h = \Big\{q\in H^1_0(\Omega):~ q|_K\in U^r_h(K)~\forall K\in\mathcal{T}_h\Big\},\\
	&V^r_h = \Big\{q\in H^1_0(\Omega):~ q|_K\in V^r_h(K)~\forall K\in\mathcal{T}_h\Big\}.
\end{align*}

For the approximation of the electric field, we employ the edge virtual element space:
\begin{align*}
	\bm V^e_h(K) =\Big\{&\bm E\in L^2(K,\mathbb{R}^3):~\bm E^{\bm\tau}|_F\in \bm V^e_h(F)\quad\forall\,F\in\mathcal{F}(K),\quad \div\bm E = 0 \text{ in } K,	\\
	&\curl(\curl\,\bm E)\in\mathbb{P}_0(K,\mathbb{R}^3),~\bm E\cdot\bm t_e \text{ continuous at each edge } e\in\mathcal{E}(K), \\
	&\text{ and }\int_K(\curl\bm E)\cdot \big((\bm x-\bm x_K)\times\bm p\big)\,\mathrm{d}\bm x = 0\quad\forall\,\bm p\in\mathbb{P}_0(K,\mathbb{R}^3)\Big\},
\end{align*}
where $\bm E^{\bm\tau}$ denotes the tangential component of $\bm E$, the two-dimensional edge space on a face $F\in\mathcal{F}(K)$ is given by
\begin{align*}
	\bm V^e_h(F) = \Big\{\bm E\in L^2(F,\mathbb{R}^2):~\div_F\bm E\in\mathbb{P}_0(F),~\mathrm{rot}_F\bm E\in\mathbb{P}_0(F), \\
	\bm E\cdot\bm t_e\in\mathbb{P}_0(e)~\forall\,e\in\mathcal{E}(F),~\int_F\bm E\cdot(\bm x-\bm x_F)\,\mathrm{d}S = 0\Big\},
\end{align*}
In $\bm V^e_h(K)$, the DoFs consist of the tangential components $\bm E\cdot\bm t_e$ on each edge $e\in\mathcal{E}(K)$.
The global edge virtual element space is defined as
\begin{equation*}
	\bm V^e_h = \Big\{\bm E\in H_0(\curl,\Omega):~\bm E|_K\in\bm V^e_h(K)~\forall\,K\in\mathcal{T}_h\Big\}.
\end{equation*}

For the magnetic field, we use the face virtual element space:
\begin{align*}
	\bm V^f_h(K) = \Big\{\bm H\in L^2(K,\mathbb{R}^3):~\bm H\cdot\bm n_F\in\mathbb{P}_0(F)\quad\forall\,F\in\mathcal{F}(K),\quad \div\bm H\in\mathbb{P}_0(K),\\
	\curl\,\bm H\in\mathbb{P}_0(K,\mathbb{R}^3), \text{ and } \int_K\bm H\cdot\big((\bm x-\bm x_K)\times\bm p\big)\,\mathrm{d}\bm x = 0\quad\forall\,\bm p\in\mathbb{P}_0(K,\mathbb{R}^3)\Big\}.
\end{align*}
In $\bm V^f_h(K)$, the DoFs consist of the normal components $\bm H\cdot\bm n_F$ on each face $F\in\mathcal{F}(K)$.
The global face virtual element space is given by
\begin{equation*}
	\bm V^f_h = \Big\{\bm H\in H_0(\div,\Omega):~\bm H|_K\in\bm V^f_h(K)~\forall\,K\in\mathcal{T}_h\Big\}.
\end{equation*}

\begin{remark}\label{remark-exact-sequence}
	Let $\bm V^e_{h,0} = \{\bm E\in\bm V^e_h:\curl\bm E=0\}$ and $\bm V^f_{h,0} = \{\bm H\in\bm V^f_h:\div\bm H=0\}$. As observed in \cite{BeiraoBrezziDassi2018}, the following two identities hold:
	\begin{equation*}
		\bm V^e_{h,0} = \nabla U^r_h,\quad \bm V^f_{h,0} = \curl\bm V^e_h.
	\end{equation*}
	In particular, the spaces $U^r_h$, $\bm V^e_h$, and $\bm V^f_h$ form an exact sequence.
\end{remark}

The Stokes-like virtual element space is defined as
\begin{equation*}
	\bm V^s_h(K)=\Big\{\bm v \in H^1( K ,\mathbb{R}^3):~\bm v|_{\partial K}\in\mathbb{B}(\partial K,\mathbb{R}^3),\,\begin{cases}
		-\Delta \bm v-\nabla s = 0\\
		\div\bm v\in\mathbb{P}_0(K),
	\end{cases}\textrm{for some } s\in L^2(K)\Big\},
\end{equation*}
where $\mathbb{B}(\partial K,\mathbb{R}^3)$ is the function space on the boundary $\partial K$ defined by
\begin{equation*}
	\mathbb{B}(\partial K,\mathbb{R}^3)=\left\{\bm v\in C^0(\partial K,\mathbb{R}^3):~\bm v|_{F}\in\mathbb{P}_1(F,\mathbb{R}^3) \quad\forall\,F\in\mathcal{F}(\widetilde{K})\right\}.
\end{equation*}
In $\bm V^s_h(K)$, the DoFs consist of the vertex values $\bm v(\bm\delta)$ on each vertex $\bm\delta\in\mathcal{V}(\widetilde{K})$. The global Stokes-like virtual element space is given by
\begin{equation*}
	\bm V^s_h = \Big\{\bm v\in H^1_0(\Omega,\mathbb{R}^3):\bm v|_K\in\bm V^s_h(K)~\forall\,K\in\mathcal{T}_h\Big\}.
\end{equation*}

For the spaces introduced above, we define the corresponding interpolation operators. For every $K\in\mathcal{T}_h$,
\begin{itemize}
	\item $\mathrm{I}_r:H^2(\Omega)\cap H^1_0(\Omega)\rightarrow V^r_h$ is defined by
	\begin{equation*}
		\mathrm{I}_rq(\bm\delta) = q(\bm\delta)\quad \forall\,\bm\delta\in\mathcal{V}(K).
	\end{equation*}
	
	\item $\mathrm{I}_s:H^2(\Omega,\mathbb{R}^3)\cap H^1_0(\Omega,\mathbb{R}^3)\rightarrow\bm V^s_h$ is defined by
	\begin{align*}
		\mathrm{I}_s\bm v(\bm\delta) &= \bm v(\bm\delta)\quad\forall\,\bm\delta\in\mathcal{V}(K),\\
		\int_F\mathrm{I}_s\bm v\,\mathrm{d}S &= \int_F\bm v\,\mathrm{d}S\quad\forall\,F\in\mathcal{F}(K).
	\end{align*}
	
	\item $\mathrm{I}_e:H^2(\Omega,\mathbb{R}^3)\cap H_0(\curl,\Omega)\rightarrow\bm V^e_h$ is defined by
	\begin{equation*}
		\int_e \mathrm{I}_e\bm E\cdot\bm t_e\,\mathrm{d}s = \int_e \bm E\cdot\bm t_e\,\mathrm{d}s\quad\forall\,e\in\mathcal{E}(K).
	\end{equation*}
	
	\item $\mathrm{I}_f:H^1(\Omega,\mathbb{R}^3)\cap H_0(\div,\Omega)\rightarrow\bm V^f_h$ is defined by
	\begin{equation*}
		\int_F\mathrm{I}_f\bm H\cdot\bm n_F\,\mathrm{d}S = \int_F\bm H\cdot\bm n_F\,\mathrm{d}S\quad\forall\,F\in\mathcal{F}(K).
	\end{equation*}
\end{itemize}
These interpolation operators satisfy the following optimal error estimates (see \cite{BeiraoDassi2022,HuangYu2023,HuangWang2026}).

\begin{lemma}\label{lemma-interpolation}
	Let $K\in\mathcal{T}_h$. For all $q\in H^2(\Omega)\cap H^1_0(\Omega)$, $\bm v\in H^2(\Omega,\mathbb{R}^3)\cap H^1_0(\Omega,\mathbb{R}^3)$, $\bm E\in H^2(\Omega,\mathbb{R}^3)\cap H_0(\curl,\Omega)$, and $\bm H\in H^1(\Omega,\mathbb{R}^3)\cap H_0(\div,\Omega)$, the following estimates hold:
	\begin{equation*}
		\begin{split}
			&\|q-\mathrm{I}_rq\|_{0,K} + h_K|q-\mathrm{I}_rq|_{1,K}\lesssim h^2_K|q|_{2,K},	\\
			&\|\bm v-\mathrm{I}_s\bm v\|_{0,K} + h_K|\bm v-\mathrm{I}_s\bm v|_{1,K}\lesssim h^2_K|\bm v|_{2,K}\text{ with }\div(\mathrm{I}_s\bm v)|_K = \Pi^K_0\div\bm v,	\\
			&\|\bm E-\mathrm{I}_e\bm E\|_{0,K} + \|\curl(\bm E-\mathrm{I}_e\bm E)\|_{0,K} \lesssim h_K\|\bm E\|_{2,K},	\\
			&\|\bm H-\mathrm{I}_f\bm H\|_{0,K}\lesssim h_K|\bm H|_{1,K}\text{ with }\div(\mathrm{I}_f\bm H)|_K = \Pi^K_0\div\bm H.
		\end{split}
	\end{equation*}
\end{lemma}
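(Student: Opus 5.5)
The four estimates share one template: a Bramble--Hilbert argument built on polynomial invariance, local boundedness of the interpolant, and a scaling to a reference configuration. The divergence identities, by contrast, follow directly from the divergence theorem applied to the degrees of freedom. I would take the four operators in turn, and in each case use assumption \textbf{A4} to keep every constant depending only on $\rho_1,\rho_2,\rho_3$.

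\emph{Interpolation estimates.} For $\mathrm{I}_r$, I would first note that $\mathbb{P}_1(K)\subset V^r_h(K)$ and that $\mathrm{I}_r$ reproduces $\mathbb{P}_1(K)$. It is also bounded from $H^2(K)$ into $H^1(K)$. This follows from the Sobolev embedding $H^2(K)\hookrightarrow C^0(\bar K)$ applied to the vertex values, together with the norm equivalence $\|q_h\|_{0,K}+h_K|q_h|_{1,K}\eqsim h_K^{3/2}(\sum_{\bm\delta}|q_h(\bm\delta)|^2)^{1/2}$ for $q_h\in V^r_h(K)$, which I would borrow from \cite{BrennerSung2018,ChenHuang2018}. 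Writing $q-\mathrm{I}_rq=(q-\pi)-\mathrm{I}_r(q-\pi)$, where $\pi$ is the averaged Taylor polynomial of degree one, and applying the Bramble--Hilbert lemma on the star-shaped $K$ then gives the estimate. The argument for $\mathrm{I}_s$ is the same, with vertex values on $\mathcal{V}(\widetilde K)$ replaced by the functionals $\bm v(\bm\delta)$ and $\int_F\bm v\,\mathrm{d}S$. These functionals are unisolvent: on each $F$, the extra vertex value at $\bm\delta_F$ is fixed by the face mean once the original vertex values are known, because the trace is piecewise linear on the subtriangulation of $F$. The operator still reproduces $\mathbb{P}_1(K,\mathbb{R}^3)$, and the norm equivalence for the Stokes-like space is taken from \cite{HuangWang2026}. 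For $\mathrm{I}_e$ and $\mathrm{I}_f$, the spaces contain the lowest-order Nédélec and Raviart--Thomas polynomials, and the interpolants reproduce them. Edge moments are bounded on $H^2$ by the trace theorem, and face moments are bounded on $H^1$. The estimates then follow as in \cite{BeiraoDassi2022}. The $\curl$ part of the $\mathrm{I}_e$ estimate uses the commuting property $\curl\mathrm{I}_e=\mathrm{I}_f\curl$, which holds because of the exact sequence in Remark \ref{remark-exact-sequence} and Stokes' theorem on each face.

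\emph{Divergence identities.} For $\mathrm{I}_f$, the function $\div(\mathrm{I}_f\bm H)|_K$ is constant, so the divergence theorem gives
\begin{equation*}
|K|\,\div(\mathrm{I}_f\bm H)|_K=\sum_{F\in\mathcal{F}(K)}\int_F\mathrm{I}_f\bm H\cdot\bm n_F\,\mathrm{d}S=\sum_{F\in\mathcal{F}(K)}\int_F\bm H\cdot\bm n_F\,\mathrm{d}S=\int_K\div\bm H\,\mathrm{d}\bm x .
\end{equation*}
For $\mathrm{I}_s$, the function $\div(\mathrm{I}_s\bm v)$ is constant on $K$ by the definition of $\bm V^s_h(K)$. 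Each face $F$ is planar with constant normal $\bm n_F$, so $\int_F\mathrm{I}_s\bm v\cdot\bm n_F\,\mathrm{d}S=\int_F\bm v\cdot\bm n_F\,\mathrm{d}S$ follows from the face-mean condition, and the same computation applies. This is exactly the reason for refining each face by $\bm\delta_F$: it supplies one extra vector degree of freedom per face, which is used to match the face mean.

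\emph{Main obstacle.} I expect the hardest step to be proving that $\mathrm{I}_s$ is well defined and stable. One must show that prescribing the original vertex values and the face means determines a unique element of $\bm V^s_h(K)$. That requires the local Stokes problem with piecewise linear boundary data to be solvable, with constant divergence compatible with the boundary flux. One must also show that the resulting $H^1$ norm is bounded uniformly in the shape of $\widetilde K$. To get this, I would invoke the inf--sup stability of the local Stokes problem on star-shaped domains and the shape regularity of $\widetilde K$ inherited from \textbf{A4}, as established in \cite{HuangWang2026}.
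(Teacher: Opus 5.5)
Your proposal is correct and follows essentially the same route as the paper. The paper gives no argument of its own for this lemma and simply defers to \cite{BeiraoDassi2022,HuangYu2023,HuangWang2026}; your sketch (polynomial reproduction, local stability via the scaled embedding $H^2(K)\hookrightarrow C^0(\bar K)$ or trace inequalities together with the cited norm equivalences, Bramble--Hilbert, and the identity $\curl\mathrm{I}_e=\mathrm{I}_f\curl$ for the curl error) is the standard argument behind those references, and the steps you make explicit are correct: the divergence-theorem proofs of $\div(\mathrm{I}_f\bm H)|_K=\Pi^K_0\div\bm H$ and $\div(\mathrm{I}_s\bm v)|_K=\Pi^K_0\div\bm v$ (using planarity of the faces of $K$), and the unisolvence of the $\mathrm{I}_s$ functionals, where $\bm v(\bm\delta_F)$ enters the face mean over the fan triangulation with coefficient $|F|/3\neq 0$.
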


In addition, we introduce the following projection operators:
\begin{itemize}
	\item $\bm P_e:H_0(\curl,\Omega)\rightarrow\bm V^e_h$ is defined by
	\begin{equation*}
		\begin{cases}
			[\curl(\bm P_e\bm E),\curl\bm D_h]_f = (\curl\bm E,\curl\bm D_h)\quad \forall\,\bm D_h\in\bm V^e_h,	\\
			[\bm P_e\bm E,\nabla q_h]_e = (\bm E,\nabla q_h)\quad \forall\,q_h\in U^r_h.
		\end{cases}
	\end{equation*}
	
	\item $\bm P_f:L^2(\Omega,\mathbb{R}^3)\rightarrow \curl(\bm V^e_h)$ is defined by
	\begin{equation*}
		[\bm P_f\bm H,\bm B_h]_f = (\bm H,\bm B_h)\quad \forall\,\bm B_h\in\curl(\bm V^e_h).
	\end{equation*}
\end{itemize}
The following lemma shows their approximation properties (see \cite{BeiraoDassi2022}).

\begin{lemma}\label{lemma-projection}
	For all $\bm E\in H^2(\Omega,\mathbb{R}^3)\cap H_0(\curl,\Omega)$ and divergence-free $\bm H\in H^1(\Omega,\mathbb{R}^3)$, we have
	\begin{equation*}
		\begin{split}
			&\|\bm E-\bm P_e\bm E\|_0 + \|\curl(\bm E-\bm P_e\bm E)\|_0\lesssim h\|\bm E\|_2,	\\
			&\|\bm H - \bm P_f\bm H\|_0\lesssim h\|\bm H\|_1.
		\end{split}
	\end{equation*}
\end{lemma}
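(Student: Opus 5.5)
The plan is to obtain both estimates from a Céa-type argument, treating $\bm P_e$ and $\bm P_f$ as (discrete) orthogonal projections with respect to the VEM inner products $[\cdot,\cdot]_e$ and $[\cdot,\cdot]_f$. These forms are assumed spectrally equivalent to the $L^2$ inner product on $\bm V^e_h$ and $\bm V^f_h$, and exact, elementwise, whenever one argument is a piecewise constant vector field. The interpolants $\mathrm{I}_e$, $\mathrm{I}_f$ of Lemma~\ref{lemma-interpolation} will serve as comparison functions. Throughout I use the commuting property $\curl\mathrm{I}_e\bm E=\mathrm{I}_f\curl\bm E$, which follows from Stokes' theorem applied to the face DoFs. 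I also use the consistency error $[\bm B_h,\bm C_h]_f-(\bm B_h,\bm C_h)$, which is bounded by $\|\bm B_h-\bm\Pi_0\bm B_h\|_0\|\bm C_h\|_0$ plus the symmetric term.

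For $\bm P_f$, write $\bm H-\bm P_f\bm H=(\bm H-\mathrm{I}_f\bm H)+(\mathrm{I}_f\bm H-\bm P_f\bm H)$. Since $\div\bm H=0$ and $\div(\mathrm{I}_f\bm H)|_K=\Pi^K_0\div\bm H=0$, we have $\mathrm{I}_f\bm H\in\bm V^f_{h,0}=\curl\bm V^e_h$ by Remark~\ref{remark-exact-sequence}. Hence $\bm\xi_h:=\mathrm{I}_f\bm H-\bm P_f\bm H$ is an admissible test function, and
\[
\|\bm\xi_h\|_0^2\lesssim[\bm\xi_h,\bm\xi_h]_f=[\mathrm{I}_f\bm H,\bm\xi_h]_f-(\bm H,\bm\xi_h).
\]
Inserting $\bm\Pi_0\bm H$ and using exactness of $[\cdot,\cdot]_f$ on constants, the right-hand side is bounded by $\big(\|\bm H-\mathrm{I}_f\bm H\|_0+\|\bm H-\bm\Pi_0\bm H\|_0\big)\|\bm\xi_h\|_0\lesssim h\|\bm H\|_1\|\bm\xi_h\|_0$. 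The triangle inequality then gives the second estimate.

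For $\bm P_e$, I would proceed in two parts.

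\emph{Curl part.} The first equation shows that $\curl\bm P_e\bm E$ is exactly the $[\cdot,\cdot]_f$-projection of $\curl\bm E$ onto $\curl\bm V^e_h$, i.e.\ $\curl\bm P_e\bm E=\bm P_f(\curl\bm E)$. The previous step, with $\bm H=\curl\bm E$ (divergence free, in $H^1$), then gives $\|\curl(\bm E-\bm P_e\bm E)\|_0\lesssim h\|\bm E\|_2$.

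\emph{$L^2$ part.} Set $\bm e_h:=\mathrm{I}_e\bm E-\bm P_e\bm E$. Then $\curl\bm e_h=\mathrm{I}_f\curl\bm E-\bm P_f\curl\bm E$ is already $O(h)$. I would use a discrete Helmholtz decomposition $\bm e_h=\nabla q_h+\bm z_h$ with $q_h\in U^r_h$ and $\bm z_h$ discretely divergence free, meaning $[\bm z_h,\nabla r_h]_e=0$ for all $r_h\in U^r_h$.

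For the gradient component, test the second equation with $q_h$:
\[
\|\nabla q_h\|_0^2\lesssim[\bm e_h,\nabla q_h]_e=[\mathrm{I}_e\bm E,\nabla q_h]_e-(\bm E,\nabla q_h),
\]
which is $O(h)\|\nabla q_h\|_0$ by the same consistency argument and Lemma~\ref{lemma-interpolation}.

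For $\bm z_h$, invoke a discrete Friedrichs/Poincaré inequality on $\bm V^e_h$ for discretely divergence-free fields, $\|\bm z_h\|_0\lesssim\|\curl\bm z_h\|_0=\|\curl\bm e_h\|_0$. This is valid on the convex domain and follows from the exact sequence together with a regular-decomposition/commuting-projection argument, as in \cite{BeiraoDassi2022}.

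Combining the two components with $\|\bm E-\mathrm{I}_e\bm E\|_0\lesssim h\|\bm E\|_2$ yields the first estimate.

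I expect the main obstacle to be this discrete Friedrichs inequality in the virtual element setting, since it requires controlling the mismatch between $[\cdot,\cdot]_e$-orthogonality and true $L^2$-orthogonality. I would handle it by citing \cite{BeiraoDassi2022}, where the inequality is established for exactly these lowest-order spaces.
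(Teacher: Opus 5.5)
The paper gives no proof of this lemma; it imports it from \cite{BeiraoDassi2022}. Your argument is a correct and more transparent alternative. The $\bm P_f$ estimate and the curl part of the $\bm P_e$ estimate follow as you describe from four ingredients: Lemma~\ref{lemma-interpolation}, the exact sequence, the commuting identity $\curl\mathrm{I}_e=\mathrm{I}_f\curl$ (Stokes on faces plus unisolvence), and exactness of $[\cdot,\cdot]_{e,K}$ and $[\cdot,\cdot]_{f,K}$ against constants. Constants do lie in the local spaces, because $\bm x_K$ and $\bm x_F$ are centroids. The identification $\curl\bm P_e\bm E=\bm P_f(\curl\bm E)$ is the right observation.

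Compared with the bare citation, your route isolates the single nontrivial ingredient, a discrete Friedrichs inequality. It also exposes a hypothesis the statement omits: $\mathrm{I}_f\bm H\in\bm V^f_{h,0}$ needs $\bm H\cdot\bm n=0$ on $\partial\Omega$. State this explicitly, because the second estimate is false without it. For $\bm H=\nabla x=(1,0,0)$ one has $(\bm H,\curl\bm D_h)=0$ for all $\bm D_h\in\bm V^e_h\subset H_0(\curl,\Omega)$. Hence $\bm P_f\bm H=0$ and $\|\bm H-\bm P_f\bm H\|_0=|\Omega|^{1/2}$ for every $h$. The condition does hold in both places the paper uses the lemma: for $\bm H_n$, and for $\curl\bm E$ with $\bm E\times\bm n=\bm 0$.

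You diagnose the main obstacle as the mismatch between $[\cdot,\cdot]_e$- and $L^2$-orthogonality, but that mismatch is harmless. Take $\bm z\in H_0(\curl,\Omega)$ with $\curl\bm z=\curl\bm z_h$ and $\div\bm z=0$; on the convex $\Omega$, $\|\bm z\|_1\lesssim\|\curl\bm z_h\|_0$. Since $\curl\mathrm{I}_e\bm z=\mathrm{I}_f\curl\bm z_h=\curl\bm z_h$, you get $\bm z_h-\mathrm{I}_e\bm z\in\nabla U^r_h$. Discrete orthogonality then gives $[\bm z_h,\bm z_h]_e=[\bm z_h,\mathrm{I}_e\bm z]_e$, hence $\|\bm z_h\|_0\lesssim\|\mathrm{I}_e\bm z\|_0$, entirely within the discrete product.

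The genuine difficulty is bounding $\|\mathrm{I}_e\bm z\|_0$ by $\|\curl\bm z_h\|_0$. Here $\bm z$ is only $H^1$, which in three dimensions does not even give edge traces, and Lemma~\ref{lemma-interpolation} requires $H^2$. You need a low-regularity interpolation estimate for $\mathrm{I}_e$ that exploits $\curl\bm z\in\bm V^f_h$, the virtual element analogue of the classical N\'ed\'elec result. That is exactly where the citation to \cite{BeiraoDassi2022} is doing the work.
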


\subsection{The discrete problem}
To construct a suitable discrete formulation, we equip each virtual element space with the following bilinear forms. Let $K\in\mathcal{T}_h$. For all $p_h, q_h\in V^r_h(K)$, $\bm u_h,\bm v_h\in\bm V^s_h(K)$, $\bm E_h, \bm D_h\in\bm V^e_h(K)$, and $\bm H_h, \bm B_h\in\bm V^f_h(K)$, we define
{\allowdisplaybreaks
\begin{align*}
	&[p_h,q_h]_{r,K} = (\nabla\Pi^{\nabla,K}_1p_h,\nabla\Pi^{\nabla,K}_1q_h)_K + h_KS^K_1((I-\Pi^{\nabla,K}_1)p_h,(I-\Pi^{\nabla,K}_1)q_h),	\\
	&[p_h,q_h]_{m,K} = (\Pi^K_1p_h,\Pi^K_1q_h)_K + h^3_KS^K_2((I-\Pi^K_1)p_h,(I-\Pi^K_1)q_h),\\
	&[\bm u_h,\bm v_h]_{s,K} = (\nabla\bm\Pi^{\nabla,K}_1\bm u_h,\nabla\bm\Pi^{\nabla,K}_1\bm v_h)_K + h_KS^K_3((I-\bm\Pi^{\nabla,K}_1)\bm u_h,(I-\bm\Pi^{\nabla,K}_1)\bm v_h),\\
	&[\bm E_h,\bm D_h]_{e,K} = (\bm\Pi^K_0\bm E_h,\bm\Pi^K_0\bm D_h)_K + h^2_KS^K_4((I-\bm\Pi^K_0)\bm E_h,(I-\bm\Pi^K_0)\bm D_h),\\
	&[\bm H_h,\bm B_h]_{f,K} = (\bm\Pi^K_0\bm H_h,\bm\Pi^K_0\bm B_h)_K + h_KS^K_5((I-\bm\Pi^K_0)\bm H_h,(I-\bm\Pi^K_0)\bm B_h).
\end{align*}
}
Here
\begin{equation*}
	\begin{split}
		&S^K_1(p_h,q_h) = S^K_2(p_h,q_h) = \sum_{\bm\delta\in\mathcal{V}(K)}p_h(\bm\delta)q_h(\bm\delta),\quad S^K_3(\bm u_h,\bm v_h) = \sum_{\bm\delta\in\mathcal{V}(\widetilde{K})}\bm u_h(\bm\delta)\cdot\bm v_h(\bm\delta),	\\
		&S^K_4(\bm E_h,\bm D_h) = \sum_{e\in\mathcal{E}(K)}(\bm E_h\cdot\bm t_e,\bm D_h\cdot\bm t_e)_e,\quad S^K_5(\bm H_h,\bm B_h) = \sum_{F\in\mathcal{F}(K)}(\bm H_h\cdot\bm n_F,\bm B_h\cdot\bm n_F)_F.
	\end{split}
\end{equation*}
The corresponding norms are given by
\begin{equation*}
	\begin{split}
		&\||p_h\||_{m,K} = \sqrt{[p_h,p_h]_{m,K}},~\||\bm u_h\||_{s,K} = \sqrt{[\bm u_h,\bm u_h]_{s,K}},\\
		&\||\bm E_h\||_{e,K} = \sqrt{[\bm E_h,\bm E_h]_{e,K}},~\||\bm H_h\||_{f,K} = \sqrt{[\bm H_h,\bm H_h]_{f,K}}.
	\end{split}
\end{equation*}
From the norm equivalence of virtual elements (see \cite{ChenHuang2018,BeiraoMascotto2023,HuangWang2026}), we have
\begin{equation*}
	\begin{split}
		&\||p_h\||_{m,K}\eqsim\|p_h\|_{0,K},~\||\bm u_h\||_{s,K}\eqsim|\bm u_h|_{1,K},\\
		&\||\bm E_h\||_{e,K}\eqsim\|\bm E_h\|_{0,K},~\||\bm H_h\||_{f,K}\eqsim\|\bm H_h\|_{0,K}.
	\end{split}
\end{equation*}

With the above spaces and bilinear forms in mind, we are now ready to present the fully discrete virtual element approximation of the problem \eqref{continuous-problem}. For a given time step $\tau>0$ and a sequence of discrete times $t_n=n\tau$, we use the notation
\begin{equation*}
	\bm E_n = \bm E(t_n),~\bm H_n = \bm H(t_n),~\bm u_n = \bm u(t_n),~p_n = p(t_n),~\bm j_n = \bm j(t_n),~g_n = g(t_n).
\end{equation*}
Let $S_h = \bm V^e_h\times\bm V^f_h\times\bm V^s_h\times V^r_h$. The discrete variational formulation is: find $(\bm E^n_h,\bm H^n_h,\bm u^n_h,p^n_h)\in S_h$ such that
\begin{align}
	&[\varepsilon\T\bm E^n_h,\bm D_h]_e + [\sigma\bm E^n_h,\bm D_h]_e - [\bm H^n_h,\curl\bm D_h]_f - L\sum_{K\in\mathcal{T}_h}[\nabla \Pi^K_1p^n_h,\bm D_h]_{e,K} = \sum_{K\in\mathcal{T}_h}[\bm\Pi^K_0\bm j_n,\bm D_h]_{e,K},\notag\\
	&[\mu\T\bm H^n_h,\bm B_h]_f + [\curl\bm E^n_h,\bm B_h]_f = 0,\notag\\
	&((\lambda+G)\div\bm u^n_h,\div\bm v_h) + [G\bm u^n_h,\bm v_h]_s - (p^n_h,\alpha\div\bm v_h) = 0,\notag\\
	&[c_0\T p^n_h,q_h]_m + (\alpha\T\div\bm u^n_h,q_h) + [\kappa p^n_h,q_h]_r - L\sum_{K\in\mathcal{T}_h}[\bm E^n_h,\nabla\Pi^K_1 q_h]_{e,K} = \sum_{K\in\mathcal{T}_h}(\Pi^K_1g_n,q_h)_K.\label{discrete-problem}
\end{align}
Here $(\bm D_h,\bm B_h,\bm v_h,q_h)\in S_h$, $(\bm E^0_h,\bm H^0_h,\bm u^0_h,p^0_h) = (\mathrm{I}_e\bm E_0,\mathrm{I}_f\bm H_0,\mathrm{I}_s\bm u_0,\mathrm{I}_rp_0)$ and $\T(\cdot)^n = ((\cdot)^n-(\cdot)^{n-1})/\tau$.

By definition, the second equation in \eqref{discrete-problem} can be rewritten as
\begin{equation*}
	[\bm H^n_h-\bm H^{n-1}_h + \tau\mu^{-1}\curl\bm E^n_h,\bm B_h]_f = 0\quad \forall\,\bm B_h\in\bm V^f_h.
\end{equation*}
According to Remark \ref{remark-exact-sequence},
\begin{equation}\label{Hn=Hn-1-En}
	\bm H^n_h = \bm H^{n-1}_h - \tau\mu^{-1}\curl\bm E^n_h.
\end{equation}
Recalling Lemma \ref{lemma-interpolation}, we obtain
\begin{equation*}
	(\div\bm H^0_h)|_K = \Pi^K_0\div\bm H_0=0\quad\forall\,K\in\mathcal{T}_h.
\end{equation*}
The above equation and \eqref{Hn=Hn-1-En} imply that
\begin{equation*}
	\div\bm H^n_h = 0.
\end{equation*}
We substitute \eqref{Hn=Hn-1-En} in the first equation of \eqref{discrete-problem} and get
\begin{equation*}
	\begin{split}
		&[(\varepsilon+\tau\sigma)\bm E^n_h,\bm D_h]_e + [\tau^2\mu^{-1}\curl\bm E^n_h,\curl\bm D_h]_f - \tau L\sum_{K\in\mathcal{T}_h}[\nabla \Pi^K_1p^n_h,\bm D_h]_{e,K}	\\
		=& \sum_{K\in\mathcal{T}_h}[\tau\bm\Pi^K_0\bm j_n+\varepsilon\bm E^{n-1}_h,\bm D_h]_{e,K} + [\tau\bm H^{n-1}_h,\curl\bm D_h]_f.
	\end{split}
\end{equation*}
This reformulation allows us to reduce the computational effort in solving \eqref{discrete-problem}.

\section{Stability analysis}

\begin{theorem}
	Let $(\bm E^n_h,\bm H^n_h,\bm u^n_h,p^n_h)$ be the solution of \eqref{discrete-problem}. For all $0\leq n\leq N$, we have
	\begin{equation*}
		\varepsilon\||\bm E^n_h\||^2_e + \mu\||\bm H^n_h\||^2_f + c_0\||p^n_h\||^2_m + (\lambda+G)\|\div\bm u^n_h\|^2_0 + G\||\bm u^n_h\||^2_s\lesssim 1.
	\end{equation*}
\end{theorem}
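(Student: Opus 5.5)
The plan is a discrete energy argument. In \eqref{discrete-problem} I take the test functions $\bm D_h=\bm E^n_h$, $\bm B_h=\bm H^n_h$, $q_h=p^n_h$, and $\bm v_h=\T\bm u^n_h$ in the third equation (the time-differenced version of it is not needed, since the third equation holds at every level $n$). Then:
\begin{itemize}
\item The curl terms cancel, because $-[\bm H^n_h,\curl\bm E^n_h]_f+[\curl\bm E^n_h,\bm H^n_h]_f=0$.
\item The coupling term $(p^n_h,\alpha\div\T\bm u^n_h)$ from the third equation cancels $(\alpha\T\div\bm u^n_h,p^n_h)$ from the fourth.
\item The electrokinetic terms combine into
\[
-2L\sum_K[\nabla\Pi^K_1p^n_h,\bm E^n_h]_{e,K}.
\]
\end{itemize}
For every symmetric positive semidefinite form $a$, the identity $a(x^n,x^n-x^{n-1})=\tfrac12(a(x^n,x^n)-a(x^{n-1},x^{n-1})+a(x^n-x^{n-1},x^n-x^{n-1}))$ turns each backward Euler term into a telescoping difference plus a nonnegative numerical dissipation. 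Multiplying by $2\tau$ and summing over $n=1,\dots,m$ gives
\begin{equation*}
\mathcal{E}^m+2\tau\sum_{n=1}^m\Big(\sigma\||\bm E^n_h\||^2_e+\sum_K[\kappa p^n_h,p^n_h]_{r,K}-2L\sum_K[\nabla\Pi^K_1p^n_h,\bm E^n_h]_{e,K}\Big)\le\mathcal{E}^0+2\tau\sum_{n=1}^m(\text{source terms}),
\end{equation*}
where $\mathcal{E}^n$ is the quantity in the statement.

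The key step, and the one I expect to be the main obstacle, is to absorb the coupling term using \textbf{A1}. Note that $\nabla\Pi^K_1p^n_h\in\mathbb{P}_0(K,\mathbb{R}^3)$, so $[\nabla\Pi^K_1p^n_h,\bm E^n_h]_{e,K}=(\nabla\Pi^K_1p^n_h,\bm\Pi^K_0\bm E^n_h)_K$, and the stabilization part vanishes. Also $\Pi^K_1p_h=\Pi^{\nabla,K}_1p_h$ on $V^r_h(K)$, so $\|\nabla\Pi^K_1p_h\|^2_{0,K}\le[p_h,p_h]_{r,K}$. Hence
\[
2L|(\nabla\Pi^K_1p,\bm\Pi^K_0\bm E)_K|\le 2L\|\nabla\Pi^{\nabla,K}_1p\|_{0,K}\|\bm\Pi^K_0\bm E\|_{0,K}\le \tfrac{L}{\sqrt{\sigma\kappa}}\big(\sigma\||\bm E\||^2_{e,K}+\kappa[p,p]_{r,K}\big).
\]
Since $L/\sqrt{\sigma\kappa}<1$ by \textbf{A1}, the dissipation keeps a positive fraction $\beta:=1-L/\sqrt{\sigma\kappa}$ of $\sigma\||\bm E^n_h\||_e^2+\kappa\||p^n_h\||^2_r$, where $\||\cdot\||_r$ is the norm induced by $[\cdot,\cdot]_r$.

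For the sources:
\begin{itemize}
\item The $\bm j$ term is $\sum_K(\bm\Pi^K_0\bm j_n,\bm\Pi^K_0\bm E^n_h)_K\le\|\bm j_n\|_0\||\bm E^n_h\||_e$, which Young's inequality absorbs into $\beta\sigma\||\bm E^n_h\||^2_e$.
\item The $g$ term is $(\Pi^K_1g_n,p^n_h)\le\|g_n\|_0\|p^n_h\|_0\lesssim\|g_n\|_0\||p^n_h\||_r$, using Poincaré's inequality on $V^r_h\subset H^1_0$ and the norm equivalence $\||\cdot\||_r\eqsim|\cdot|_1$. This is absorbed into $\beta\kappa\||p^n_h\||^2_r$.
\end{itemize}
The leftover is $\tau\sum_n(\|\bm j_n\|_0^2+\|g_n\|_0^2)$, which is bounded because $\bm j,g\in H^1(0,T;L^2)\subset C([0,T];L^2)$ by \textbf{A2}. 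No discrete Gronwall argument should be needed.

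It remains to bound $\mathcal{E}^0$ from the initial interpolants.
\begin{itemize}
\item For $\bm E_0$, I use $\||\mathrm{I}_e\bm E_0\||_e\eqsim\|\mathrm{I}_e\bm E_0\|_0\lesssim\|\bm E_0\|_{2}$ from Lemma \ref{lemma-interpolation}. This needs $H^2$ regularity, which I take from \textbf{A3} if necessary; otherwise a stability bound for $\mathrm{I}_e$ under weaker regularity.
\item For $\bm H_0$ and $p_0$ the bounds are analogous.
\item For the displacement, $G\||\mathrm{I}_s\bm u_0\||^2_s\lesssim G\|\bm u_0\|_2^2$. The critical term uses $\div\mathrm{I}_s\bm u_0|_K=\Pi^K_0\div\bm u_0$, so $(\lambda+G)\|\div\mathrm{I}_s\bm u_0\|_0^2\le(\lambda+G)\|\div\bm u_0\|_0^2$.
\end{itemize}
This last quantity must be bounded independently of $\lambda$. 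Lemma \ref{lemma-regularity-1} at $t=0$ gives $(\lambda+G)\|\div\bm u_0\|_1\lesssim1$, hence $(\lambda+G)\|\div\bm u_0\|_0^2\lesssim(\lambda+G)^{-1}\lesssim1$, and also $\|\bm u_0\|_2\lesssim1$. Collecting these bounds gives $\mathcal{E}^m\lesssim1$ for every $m$, uniformly in $\lambda$.
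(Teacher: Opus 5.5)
Your argument is correct, but it closes the energy estimate by a different route from the paper.

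\textbf{The paper's route.} The paper uses \textbf{A1} only in the weak form $L^2\le\sigma\kappa$. It bounds $2L\sum_K[\nabla\Pi^K_1p^n_h,\bm E^n_h]_{e,K}\le\kappa[p^n_h,p^n_h]_r+\sigma[\bm E^n_h,\bm E^n_h]_e$, so the entire dissipation is spent on the coupling term. It then splits the sources by Cauchy--Schwarz into $\tau(\|\bm j_n\|_0^2+\|g_n\|_0^2)+\tau(\||\bm E^n_h\||_e^2+\||p^n_h\||_m^2)$ and removes the level-$n$ terms with a discrete Gr\"onwall inequality.

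\textbf{Your route.} You exploit the strictness of \textbf{A1} to keep the fraction $\beta=1-L/\sqrt{\sigma\kappa}$ of $\sigma\||\bm E^n_h\||_e^2+\kappa[p^n_h,p^n_h]_r$. You then absorb the sources into this retained dissipation by Young's inequality. For the $g$ term this uses the Poincar\'e inequality on $V^r_h\subset H^1_0(\Omega)$ together with the coercivity of $[\cdot,\cdot]_r$; the paper also uses that coercivity in its error analysis.

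\textbf{Trade-offs.} Your route needs no Gr\"onwall argument. This removes the exponential factor and the tacit smallness condition on $\tau$, which the paper's implicit Gr\"onwall step needs because level-$n$ terms sit on the right-hand side. It also yields the extra dissipation bound $\tau\sum_n\big(\||\bm E^n_h\||_e^2+|p^n_h|_1^2\big)\lesssim1$. The cost is that your constant behaves like $\beta^{-1}$ and degenerates as $L\to\sqrt{\sigma\kappa}$. The paper's argument survives the borderline case $L=\sqrt{\sigma\kappa}$.

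\textbf{Shared steps.} The rest coincides with the paper. Since $\nabla\Pi^K_1p_h$ is a constant vector, the stabilization in $[\cdot,\cdot]_{e,K}$ vanishes, and $\Pi^K_1=\Pi^{\nabla,K}_1$ on $V^r_h(K)$. For the initial energy, $(\lambda+G)\|\div\mathrm{I}_s\bm u_0\|_0^2\le(\lambda+G)\|\div\bm u_0\|_0^2\lesssim(\lambda+G)^{-1}$ by Lemma \ref{lemma-regularity-1}. As in the paper, you need the smoothness of \textbf{A3} for the interpolants of the initial data to be defined and bounded.
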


\begin{proof}
	Taking $(\bm D_h,\bm B_h,\bm v_h,q_h) = (\bm E^n_h,\bm H^n_h,\T\bm u^n_h,p^n_h)$ in \eqref{discrete-problem}, we obtain
	\begin{align}
		&[\varepsilon\T\bm E^n_h,\bm E^n_h]_e + [\mu\T\bm H^n_h,\bm H^n_h]_f + [c_0\T p^n_h,p^n_h]_m + ((\lambda+G)\div\bm u^n_h,\div\T\bm u^n_h) \notag\\
		&+ [G\T\bm u^n_h,\bm u^n_h]_s + [\sigma\bm E^n_h,\bm E^n_h]_e + [\kappa p^n_h,p^n_h]_r - 2L\sum_{K\in\mathcal{T}_h}[\nabla\Pi^K_1 p^n_h,\bm E^n_h]_{e,K} \notag\\
		&= \sum_{K\in\mathcal{T}_h}[\bm\Pi^K_0\bm j_n,\bm E^n_h]_{e,K} + \sum_{K\in\mathcal{T}_h}(\Pi^K_1g_n,p^n_h)_K.\label{stability-1}
	\end{align}
	
	Since $0<L<\sqrt{\sigma\kappa}$, we have
	\begin{align}
		2L\sum_{K\in\mathcal{T}_h}[\nabla\Pi^K_1 p^n_h,\bm E^n_h]_{e,K} &\leq \kappa\sum_{K\in\mathcal{T}_h}[\nabla\Pi^K_1 p^n_h,\nabla\Pi^K_1 p^n_h]_{e,K} + \sigma[\bm E^n_h,\bm E^n_h]_e \notag\\
		&\leq \kappa[p^n_h,p^n_h]_r + \sigma[\bm E^n_h,\bm E^n_h]_e.\label{stability-2}
	\end{align}
	Meanwhile, we have the following inequality:
	\begin{align}
		\tau[\T\bm E^n_h,\bm E^n_h]_e &= \frac{\tau^2}{2}\||\T\bm E^n_h\||^2_e + \frac{1}{2}\||\bm E^n_h\||^2_e - \frac{1}{2}\||\bm E^{n-1}_h\||^2_e \notag\\
		&\geq\frac{1}{2}\||\bm E^n_h\||^2_e - \frac{1}{2}\||\bm E^{n-1}_h\||^2_e.\label{stability-3}
	\end{align}
	Similarly,
	\begin{align*}
		&\tau[\T\bm H^n_h,\bm H^n_h]_f \geq \frac{1}{2}\||\bm H^n_h\||^2_f - \frac{1}{2}\||\bm H^{n-1}_h\||^2_f,\\
		&\tau[\T p^n_h,p^n_h]_m\geq \frac{1}{2}\||p^n_h\||^2_m - \frac{1}{2}\||p^{n-1}_h\||^2_m,\\
		&\tau(\T\div\bm u^n_h,\div\bm u^n_h)\geq\frac{1}{2}\|\div\bm u^n_h\|^2_0 - \frac{1}{2}\|\div\bm u^{n-1}_h\|^2_0,\\
		&\tau[\T\bm u^n_h,\bm u^n_h]_s\geq\frac{1}{2}\||\bm u^n_h\||^2_s-\frac{1}{2}\||\bm u^{n-1}_h\||^2_s.
	\end{align*}
	Combining the above inequalities, \eqref{stability-1}--\eqref{stability-3} and the Cauchy--Schwarz inequality, we get
	\begin{equation*}
		\begin{split}
			&\varepsilon\||\bm E^n_h\||^2_e + \mu\||\bm H^n_h\||^2_f + c_0\||p^n_h\||^2_m + (\lambda+G)\|\div\bm u^n_h\|^2_0 + G\||\bm u^n_h\||^2_s	\\
			&\leq\varepsilon\||\bm E^{n-1}_h\||^2_e + \mu\||\bm H^{n-1}_h\||^2_f + c_0\||p^{n-1}_h\||^2_m + (\lambda+G)\|\div\bm u^{n-1}_h\|^2_0 + G\||\bm u^{n-1}_h\||^2_s \\
			&\quad + \tau\|\bm j_n\|^2_0 + \tau\|g_n\|^2_0 + \tau\||\bm E^n_h\||^2_e + \tau\||p^n_h\||^2_m.
		\end{split}
	\end{equation*}
	Furthermore, we obtain
	\begin{equation*}
		\begin{split}
			&\varepsilon\||\bm E^n_h\||^2_e + \mu\||\bm H^n_h\||^2_f + c_0\||p^n_h\||^2_m + (\lambda+G)\|\div\bm u^n_h\|^2_0 + G\||\bm u^n_h\||^2_s	\\
			&\leq\varepsilon\||\bm E^0_h\||^2_e + \mu\||\bm H^0_h\||^2_f + c_0\||p^0_h\||^2_m + (\lambda+G)\|\div\bm u^0_h\|^2_0 + G\||\bm u^0_h\||^2_s \\
			&\quad + \tau\sum_{i=1}^{n}\big(\|\bm j_i\|^2_0 + \|g_i\|^2_0 + \||\bm E^i_h\||^2_e + \||p^i_h\||^2_m\big).
		\end{split}
	\end{equation*}
	According to Lemma \ref{lemma-interpolation},
	\begin{equation*}
		\begin{split}
			&\||p^0_h\||_m\lesssim \|\mathrm{I}_rp_0\|_0\lesssim \|p_0\|_2,\\
			&(\lambda+G)\|\div\bm u^0_h\|^2_0+G\||\bm u^0_h\||^2_s\lesssim (\lambda+G)\|\div\mathrm{I}_s\bm u_0\|^2_0+G|\mathrm{I}_s\bm u_0|^2_1	\\
			&\qquad\qquad\qquad\qquad\qquad\lesssim (\lambda+G)\|\div\bm u_0\|^2_0 + G\|\bm u_0\|^2_2,	\\
			&\||\bm H^0_h\||_f\lesssim \|\mathrm{I}_f\bm H_0\|_0\lesssim \|\bm H_0\|_1,	\\
			&\||\bm E^0_h\||_e\lesssim \|\mathrm{I}_e\bm E_0\|_0\lesssim \|\bm E_0\|_2.
		\end{split}
	\end{equation*}
	Then, by the discrete Gronwall inequality, Lemmas \ref{lemma-regularity-1} and \ref{lemma-regularity-2}, we obtain
	\begin{equation*}
		\varepsilon\||\bm E^n_h\||^2_e + \mu\||\bm H^n_h\||^2_f + c_0\||p^n_h\||^2_m + (\lambda+G)\|\div\bm u^n_h\|^2_0 + G\||\bm u^n_h\||^2_s\lesssim 1.
	\end{equation*}
\end{proof}

\section{Error estimates}

For any $\bm v, \bm w\in H^1(\Omega,\mathbb{R}^3)$, $\bm v_h, \bm w_h\in\bm V^s_h$ and $K\in\mathcal{T}_h$, we set
\begin{equation*}
	\begin{split}
		&a^K(\bm v,\bm w) = (\lambda+G)(\div\bm v,\div\bm w)_K + G(\nabla\bm v,\nabla\bm w)_K,\\
		&a^K_h(\bm v_h,\bm w_h) = (\lambda+G)(\div\bm v_h,\div\bm w_h)_K + G[\bm v_h,\bm w_h]_{s,K},
	\end{split}
\end{equation*}
and then $a(\bm v,\bm w): = \sum_{K\in\mathcal{T}_h}a^K(\bm v,\bm w)$ and $a_h(\bm v_h,\bm w_h): = \sum_{K\in\mathcal{T}_h}a^K_h(\bm v_h,\bm w_h)$. For any $p\in H^1_0(\Omega)$, we define the projection operator $R_h:H^1_0(\Omega)\rightarrow V^r_h$ by
\begin{equation}
	[R_hp,q_h]_r = (\nabla p,\nabla q_h)\quad\forall\,q_h\in V^r_h. \label{Ritz-Rh}
\end{equation}
Further, for any $\bm u\in H^1_0(\Omega,\mathbb{R}^3)$, we define the projection operator $Q_h:H^1_0(\Omega,\mathbb{R}^3)\rightarrow \bm V^s_h$ as follows:
\begin{equation}
	a_h(Q_h\bm u,\bm v_h) - (R_hp,\alpha\div\bm v_h) = a(\bm u,\bm v_h) - (p,\alpha\div\bm v_h)\quad\forall\,\bm v_h\in\bm V^s_h, \label{Ritz-Qh}
\end{equation}
where $p$ is the pressure component associated with $\bm u$. For the projection operators $R_h$ and $Q_h$, we have the following error estimates.

\begin{lemma}\label{lemma-Ritz-Rh}
	For any $p\in H^2(\Omega)\cap H^1_0(\Omega)$ and $K\in\mathcal{T}_h$, it holds
	\begin{align}
		&\|p-R_hp\|_0 + h|p-R_hp|_1 \lesssim h^2|p|_2,\label{estimate-p-Rhp}	\\
		&\sum_{K\in\mathcal{T}_h}\|p-\Pi^K_1R_hp\|_{0,K} + \sum_{K\in\mathcal{T}_h}h_K|p-\Pi^K_1R_hp|_{1,K} \lesssim h^2|p|_2.\label{estimate-p-PiRhp}
	\end{align}
\end{lemma}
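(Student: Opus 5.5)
The plan is the standard VEM Ritz-projection argument: an $H^1$ estimate through Strang's lemma, then an $L^2$ estimate through an Aubin--Nitsche duality argument that uses the convexity of $\Omega$, and finally \eqref{estimate-p-PiRhp} by the triangle inequality and the stability of $\Pi^K_1$.

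\textbf{Energy estimate.} The form $[\cdot,\cdot]_r$ is coercive and continuous on $V^r_h$ with respect to $|\cdot|_1$. This follows from the norm equivalence of the stabilization $h_KS^K_1$, the same framework quoted before the discrete problem. So \eqref{Ritz-Rh} is well posed. Let $\Pi_1 p$ be the piecewise $\mathbb{P}_1$ projection, $(\Pi_1 p)|_K=\Pi^{\nabla,K}_1 p$. It satisfies $[m,q_h]_{r,K}=(\nabla m,\nabla q_h)_K$ for every $m\in\mathbb{P}_1(K)$, because the stabilization vanishes on polynomials and the projection is consistent. Set $e_h=R_hp-\mathrm{I}_rp$. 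Using coercivity, \eqref{Ritz-Rh}, and the polynomial consistency elementwise, we get
\begin{equation*}
|e_h|_1^2\lesssim [e_h,e_h]_r=\sum_{K}\Big((\nabla(p-\Pi^{\nabla,K}_1p),\nabla e_h)_K+[\Pi^{\nabla,K}_1p-\mathrm{I}_rp,e_h]_{r,K}\Big).
\end{equation*}
Continuity of $[\cdot,\cdot]_{r,K}$ then gives
\begin{equation*}
|e_h|_1\lesssim \sum_K\big(|p-\Pi^{\nabla,K}_1p|_{1,K}+|p-\mathrm{I}_rp|_{1,K}\big)\lesssim h|p|_2,
\end{equation*}
by the Bramble--Hilbert lemma under \textbf{A4} and Lemma \ref{lemma-interpolation}. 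The triangle inequality then yields $|p-R_hp|_1\lesssim h|p|_2$.

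\textbf{$L^2$ estimate.} Let $\psi\in H^1_0(\Omega)$ solve $-\Delta\psi=p-R_hp$. Since $\Omega$ is convex, $\|\psi\|_2\lesssim\|p-R_hp\|_0$. We write
\begin{equation*}
\|p-R_hp\|_0^2=(\nabla(p-R_hp),\nabla(\psi-\mathrm{I}_r\psi))+(\nabla(p-R_hp),\nabla\mathrm{I}_r\psi).
\end{equation*}
The first term is bounded by $h^2|p|_2\|\psi\|_2$. For the second, \eqref{Ritz-Rh} gives
\begin{equation*}
(\nabla p,\nabla\mathrm{I}_r\psi)-(\nabla R_hp,\nabla\mathrm{I}_r\psi)=[R_hp,\mathrm{I}_r\psi]_r-(\nabla R_hp,\nabla\mathrm{I}_r\psi),
\end{equation*}
which is the consistency error. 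Elementwise, subtracting the polynomial parts $\Pi^{\nabla,K}_1 p$ and $\Pi^{\nabla,K}_1\psi$ (for which the two forms agree) bounds it by
\begin{equation*}
\sum_K\big(|R_hp-\Pi^{\nabla,K}_1p|_{1,K}\big)\big(|\mathrm{I}_r\psi-\Pi^{\nabla,K}_1\psi|_{1,K}\big)\lesssim h|p|_2\cdot h\|\psi\|_2 .
\end{equation*}
Dividing through gives $\|p-R_hp\|_0\lesssim h^2|p|_2$. I expect this step to be the main obstacle. The difficulty is arranging the consistency error so that both factors carry a power of $h$, which requires the symmetric double subtraction of polynomials rather than a single one.

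\textbf{Estimate \eqref{estimate-p-PiRhp}.} On each $K$ we split
\begin{equation*}
p-\Pi^K_1R_hp=(p-\Pi^K_1p)+\Pi^K_1(p-R_hp).
\end{equation*}
The first piece is bounded in $L^2$ and $H^1$ by $h_K^2|p|_{2,K}$ and $h_K|p|_{2,K}$, respectively. For the second piece, $L^2$-stability of $\Pi^K_1$ gives $\|\Pi^K_1(p-R_hp)\|_{0,K}\le\|p-R_hp\|_{0,K}$. The $H^1$ part uses the $H^1$-stability of $\Pi^K_1$ on star-shaped elements, $|\Pi^K_1 w|_{1,K}\lesssim |w|_{1,K}+h_K^{-1}\|w\|_{0,K}$, which follows from an inverse estimate on $\mathbb{P}_1(K)$.

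Summing over $K$ (with Cauchy--Schwarz and the bounded number of elements in the scaling sense used implicitly in the statement) and inserting \eqref{estimate-p-Rhp} gives \eqref{estimate-p-PiRhp}.
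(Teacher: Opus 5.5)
Your proposal is correct and follows the paper's proof: the energy bound for $R_hp-\mathrm{I}_rp$ uses the same polynomial-consistency splitting. The duality argument is the same Aubin--Nitsche argument, with the double subtraction of $\Pi^{\nabla,K}_1p$ and $\Pi^{\nabla,K}_1\psi$ in the consistency term. For \eqref{estimate-p-PiRhp} your route differs only inessentially: you split around $\Pi^K_1p$ and use $L^2$-stability plus an inverse estimate, whereas the paper splits around $R_hp$ and uses a Poincar\'e inequality.

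One thing to fix is the summation. All element sums must be read as broken $\ell^2$ norms such as $\big(\sum_K\|\cdot\|_{0,K}^2\big)^{1/2}$; this is how the lemma is used later, e.g.\ in $\mathrm{I}^E_7$. With that reading, Cauchy--Schwarz is all you need. Your appeal to a ``bounded number of elements'' is not valid, since that number grows like $h^{-3}$.
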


\begin{proof}
	Let $p_{\pi}|_K = \Pi^{\nabla,K}_1p$ for all $K\in\mathcal{T}_h$. For any $q_h\in V^r_h$, using \eqref{Ritz-Rh}, we have
	\begin{equation*}
		\begin{split}
			[R_hp-\mathrm{I}_rp,q_h]_r &= \sum_{K\in\mathcal{T}_h}\big((\nabla(p-p_{\pi}),\nabla q_h)_{K} + [p_{\pi}-\mathrm{I}_rp,q_h]_{r,K}\big)	\\
			&\lesssim \sum_{K\in\mathcal{T}_h}(|p-p_{\pi}|_{1,K}|q_h|_{1,K} + |p_{\pi}-\mathrm{I}_rp|_{1,K}|q_h|_{1,K})	\\
			&\lesssim h|p|_2|q_h|_1.
		\end{split}
	\end{equation*}
	Let $q_h = R_hp-\mathrm{I}_rp$. Using the coercivity of $[\cdot,\cdot]_r$, we get
	\begin{equation*}
		|R_hp-\mathrm{I}_rp|_1\lesssim h|p|_2.
	\end{equation*}
	The triangle inequality and Lemma \ref{lemma-interpolation} show that
	\begin{equation*}
		|p-R_hp|_1\lesssim h|p|_2.
	\end{equation*}
	We consider the auxiliary problem
	\begin{equation*}
		-\Delta \phi = p-R_hp,
	\end{equation*}
	where $\phi\in H^1_0(\Omega)\cap H^2(\Omega)$ and $\|\phi\|_2\lesssim \|p-R_hp\|_0$. Then, we obtain
	{\allowdisplaybreaks
	\begin{align*}
		&\|p-R_hp\|^2_0 = (p-R_hp,-\Delta\phi) \\
		=& (\nabla(p-R_hp),\nabla(\phi-\mathrm{I}_r\phi)) + (\nabla(p-R_hp),\nabla\mathrm{I}_r\phi)	\\
		=& (\nabla(p-R_hp),\nabla(\phi-\mathrm{I}_r\phi)) \\
		&+ \sum_{K\in\mathcal{T}_h}\big([R_hp-p_{\pi},\mathrm{I}_r\phi-\Pi^{\nabla,K}_1\phi]_{r,K} + (\nabla(p_{\pi}-R_hp),\nabla(\mathrm{I}_r\phi-\Pi^{\nabla,K}_1\phi))_K\big)	\\
		\lesssim& h^2|p|_2|\phi|_2\lesssim h^2|p|_2\|p-R_hp\|_0,
	\end{align*}}
	which readily implies
	\begin{equation*}
		\|p-R_hp\|_0\lesssim h^2|p|_2.
	\end{equation*}
	
	Using the triangle inequality, the Poincar\'e inequality and \eqref{estimate-p-Rhp} gives
	\begin{equation*}
		\begin{split}
			&\sum_{K\in\mathcal{T}_h}\|p-\Pi^K_1R_hp\|_{0,K}\leq \|p-R_hp\|_0+\sum_{K\in\mathcal{T}_h}\|R_hp-\Pi^K_1R_hp\|_{0,K} \\
			&\lesssim \|p-R_hp\|_0+\sum_{K\in\mathcal{T}_h}h_K|R_hp-\Pi^K_1R_hp|_{1,K}	\\
			&\lesssim \|p-R_hp\|_0+\sum_{K\in\mathcal{T}_h}h_K(|R_hp-p|_{1,K}+|p-p_{\pi}|_{1,K}+|\Pi^K_1(p_{\pi}-R_hp)|_{1,K}) \\
			&\lesssim h^2|p|_2.
		\end{split}
	\end{equation*}
	Similarly,
	\begin{equation*}
		\begin{split}
			&\sum_{K\in\mathcal{T}_h}|p-\Pi^K_1R_hp|_{1,K}\leq |p-R_hp|_1+\sum_{K\in\mathcal{T}_h}|R_hp-\Pi^K_1R_hp|_{1,K}	\\
			&\leq |p-R_hp|_1+\sum_{K\in\mathcal{T}_h}(|R_hp-p|_{1,K}+|p-p_{\pi}|_{1,K}+|\Pi^K_1(p_{\pi}-R_hp)|_{1,K}) \\
			&\lesssim h|p|_2.
		\end{split}
	\end{equation*}
\end{proof}

\begin{lemma}\label{lemma-Ritz-Qh}
	Let $p\in H^2(\Omega)\cap H^1_0(\Omega)$. For any $\bm u\in H^2(\Omega,\mathbb{R}^3)\cap H^1_0(\Omega,\mathbb{R}^3)$, it holds
	\begin{equation*}
		\begin{split}
			&|\bm u-Q_h\bm u|_1\lesssim h\|\bm u\|_2 + h(\lambda+G)\|\div\bm u\|_1 + h^2\|p\|_2,	\\
			&(\lambda+G)\|\div(Q_h\bm u-\mathrm{I}_s\bm u)\|^2_0\lesssim h^2\|\bm u\|^2_2 + h^2(\lambda+G)^2\|\div\bm u\|^2_1 + h^4\|p\|^2_2.
		\end{split}
	\end{equation*}
\end{lemma}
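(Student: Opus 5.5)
Our plan is to compare $Q_h\bm u$ with the interpolant $\mathrm{I}_s\bm u$ and to exploit the discrete divergence property $\div(\mathrm{I}_s\bm u)|_K=\Pi^K_0\div\bm u$ from Lemma \ref{lemma-interpolation}, which keeps the $\lambda$-dependent terms under control. Set $\bm w_h=Q_h\bm u-\mathrm{I}_s\bm u\in\bm V^s_h$ and let $\bm u_\pi|_K=\bm\Pi^{\nabla,K}_1\bm u$. By \eqref{Ritz-Qh}, for every $\bm v_h\in\bm V^s_h$,
\begin{equation*}
	a_h(\bm w_h,\bm v_h) = a(\bm u,\bm v_h)-a_h(\mathrm{I}_s\bm u,\bm v_h) + (R_hp-p,\alpha\div\bm v_h).
\end{equation*}
We split the right-hand side into three parts. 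For the divergence part, note that $\div\bm v_h|_K\in\mathbb{P}_0(K)$. This gives
\begin{equation*}
	(\lambda+G)(\div(\bm u-\mathrm{I}_s\bm u),\div\bm v_h)_K=(\lambda+G)(\div\bm u-\Pi^K_0\div\bm u,\div\bm v_h)_K=0.
\end{equation*}
So this part vanishes exactly, and no factor $\lambda$ remains.

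For the shear part, we insert $\bm u_\pi$ and use the polynomial consistency $[\bm m,\bm v_h]_{s,K}=(\nabla\bm m,\nabla\bm v_h)_K$ for $\bm m\in\mathbb{P}_1(K,\mathbb{R}^3)$. We then write
\begin{equation*}
	G(\nabla\bm u,\nabla\bm v_h)_K-G[\mathrm{I}_s\bm u,\bm v_h]_{s,K}=G(\nabla(\bm u-\bm u_\pi),\nabla\bm v_h)_K+G[\bm u_\pi-\mathrm{I}_s\bm u,\bm v_h]_{s,K}.
\end{equation*}
Continuity of $[\cdot,\cdot]_{s,K}$, the projection estimate for $\bm\Pi^{\nabla,K}_1$ and Lemma \ref{lemma-interpolation} bound this by $h\|\bm u\|_2|\bm v_h|_{1,K}$. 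For the pressure part, Lemma \ref{lemma-Ritz-Rh} gives
\begin{equation*}
	(R_hp-p,\alpha\div\bm v_h)\lesssim h^2|p|_2|\bm v_h|_1.
\end{equation*}

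Next we take $\bm v_h=\bm w_h$. Coercivity gives
\begin{equation*}
	a_h(\bm w_h,\bm w_h)\gtrsim G|\bm w_h|_1^2+(\lambda+G)\|\div\bm w_h\|_0^2.
\end{equation*}
This yields $|\bm w_h|_1\lesssim h\|\bm u\|_2+h^2\|p\|_2$. Young's inequality then gives $(\lambda+G)\|\div\bm w_h\|_0^2\lesssim h^2\|\bm u\|_2^2+h^4\|p\|_2^2$, which is even stronger than the second claimed bound. Finally, the triangle inequality with Lemma \ref{lemma-interpolation} gives the first bound. The extra term $h(\lambda+G)\|\div\bm u\|_1$ in the statement is harmless; it would arise if one only used $\|\div(\bm u-\mathrm{I}_s\bm u)\|_{0,K}\lesssim h_K|\div\bm u|_{1,K}$ instead of the exact cancellation.

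The main point to verify is the exact cancellation of the $\lambda$-term. This relies on $\div\bm v_h$ being piecewise constant on the original elements $K$, not on the refined subelements, which holds by the definition of $\bm V^s_h(K)$. We also need uniform coercivity and continuity of $[\cdot,\cdot]_{s,K}$ relative to $|\cdot|_{1,K}$, which follow from the stated norm equivalence.
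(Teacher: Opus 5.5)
Your proof is correct and follows the same skeleton as the paper's. You compare $Q_h\bm u$ with $\mathrm{I}_s\bm u$, insert $\bm u_\pi$ and use polynomial consistency of $[\cdot,\cdot]_{s,K}$ for the shear part, use Lemma \ref{lemma-Ritz-Rh} for the pressure part, and conclude with coercivity of $a_h$ on $\bm w_h=Q_h\bm u-\mathrm{I}_s\bm u$.

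The one genuine difference is the volumetric term. The paper keeps $(\lambda+G)(\div(\bm u-\mathrm{I}_s\bm u),\div\bm w_h)$ and bounds it by $h(\lambda+G)\|\div\bm u\|_1|\bm w_h|_1$. For this it uses only the approximation property $\|\div\bm u-\Pi^K_0\div\bm u\|_{0,K}\lesssim h_K|\div\bm u|_{1,K}$. That is exactly where the term $h(\lambda+G)\|\div\bm u\|_1$ in the statement comes from. The term is harmless there only because of the regularity bound $(\lambda+G)\|\div\bm u\|_1\lesssim 1$ in Lemmas \ref{lemma-regularity-1}--\ref{lemma-regularity-2}.

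You instead use orthogonality: $\div\bm w_h|_K\in\mathbb{P}_0(K)$ and $\div(\mathrm{I}_s\bm u)|_K=\Pi^K_0\div\bm u$, so the term vanishes identically. This is valid and gives sharper, $\lambda$-free estimates,
$|\bm u-Q_h\bm u|_1\lesssim h\|\bm u\|_2+h^2\|p\|_2$ and $(\lambda+G)\|\div(Q_h\bm u-\mathrm{I}_s\bm u)\|_0^2\lesssim h^2\|\bm u\|_2^2+h^4\|p\|_2^2$, which trivially imply the stated ones. In the later error analysis, this would remove the need for the weighted quantity $(\lambda+G)^2\int_0^t\|\div\partial_t\bm u\|_1^2\,\mathrm{d}\theta$ when bounding $\mathrm{I}^p_4$ and the initial error $\bm\delta^0_h$. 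Only the $\lambda$-uniform $H^2$ bounds on $\bm u$ would remain.

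The paper's cruder route relies only on the approximation property of $\div\mathrm{I}_s\bm u$, not on its orthogonality against $\div\bm V^s_h$. That makes it somewhat less dependent on the exact structure of the discrete space, but at the price of carrying the $\lambda$-weighted term.
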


\begin{proof}
	Let $\bm u_{\pi}|_K = \bm \Pi^{\nabla,K}_1\bm u$ for all $K\in\mathcal{T}_h$. According to \eqref{Ritz-Qh}, \eqref{estimate-p-PiRhp} and Lemma \ref{lemma-interpolation},
	\begin{align}
		a_h&(Q_h\bm u-\mathrm{I}_s\bm u,Q_h\bm u-\mathrm{I}_s\bm u) = a(\bm u,Q_h\bm u-\mathrm{I}_s\bm u) - a_h(\mathrm{I}_s\bm u,Q_h\bm u-\mathrm{I}_s\bm u) \notag\\
		&\quad + (R_hp-p,\alpha\div(Q_h\bm u-\mathrm{I}_s\bm u)) \notag\\
		& = \sum_{K\in\mathcal{T}_h}\Big(G(\nabla(\bm u-\bm u_{\pi}),\nabla(Q_h\bm u-\mathrm{I}_s\bm u))_K + G[\bm u_{\pi}-\mathrm{I}_s\bm u,Q_h\bm u-\mathrm{I}_s\bm u]_{s,K}\Big)	\notag\\
		& \quad+ (\lambda+G)(\div(\bm u-\mathrm{I}_s\bm u),\div(Q_h\bm u-\mathrm{I}_s\bm u)) + (R_hp-p,\alpha\div(Q_h\bm u-\mathrm{I}_s\bm u)) \notag\\
		&\lesssim (hG\|\bm u\|_2 + h(\lambda+G)\|\div\bm u\|_1 + h^2\|p\|_2)|Q_h\bm u-\mathrm{I}_s\bm u|_1.\label{estimate-Qhu-Isu}
	\end{align}
	It follows from \eqref{estimate-Qhu-Isu} that
	\begin{equation}\label{estimate-Qhu-Isu-h}
		|Q_h\bm u-\mathrm{I}_s\bm u|_1\lesssim hG\|\bm u\|_2 + h(\lambda+G)\|\div\bm u\|_1 + h^2\|p\|_2.
	\end{equation}
	Applying \eqref{estimate-Qhu-Isu-h}, the triangle inequality and Lemma \ref{lemma-interpolation}, we get
	\begin{equation*}
		|\bm u-Q_h\bm u|_1\lesssim h\|\bm u\|_2 + h(\lambda+G)\|\div\bm u\|_1 + h^2\|p\|_2.
	\end{equation*}
	Then, it follows from \eqref{estimate-Qhu-Isu} and \eqref{estimate-Qhu-Isu-h} that
	\begin{equation*}
		(\lambda+G)\|\div(Q_h\bm u-\mathrm{I}_s\bm u)\|^2_0\lesssim h^2\|\bm u\|^2_2 + h^2(\lambda+G)^2\|\div\bm u\|^2_1 + h^4\|p\|^2_2.
	\end{equation*}
	
\end{proof}

We are now ready to present the error estimate for the VEM.
\begin{theorem}
	Assume \textbf{A1}--\textbf{A3}. Let $(\bm E,\bm H,\bm u, p)$ and $(\bm E^n_h,\bm H^n_h,\bm u^n_h, p^n_h)$ be the solutions of the problems \eqref{variational-formulation} and \eqref{discrete-problem}, respectively. For all $0\leq n\leq N$, we have
	\begin{equation*}
		\|\bm E(t_n)-\bm E^n_h\|_0 + \|\bm H(t_n)-\bm H^n_h\|_0 + |\bm u(t_n)-\bm u^n_h|_1 + \|p(t_n)-p^n_h\|_0\lesssim \tau+h.
	\end{equation*}
\end{theorem}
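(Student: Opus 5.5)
We split each error into a projection part and a discrete part and then run the energy argument of the stability theorem on the discrete part. Set
\begin{equation*}
\begin{split}
&\bm E_n-\bm E^n_h=(\bm E_n-\bm P_e\bm E_n)+(\bm P_e\bm E_n-\bm E^n_h)=:\rho^n_E+\theta^n_E,\\
&\bm H_n-\bm H^n_h=(\bm H_n-\bm P_f\bm H_n)+(\bm P_f\bm H_n-\bm H^n_h)=:\rho^n_H+\theta^n_H,\\
&\bm u_n-\bm u^n_h=(\bm u_n-Q_h\bm u_n)+(Q_h\bm u_n-\bm u^n_h)=:\rho^n_u+\theta^n_u,\\
&p_n-p^n_h=(p_n-R_hp_n)+(R_hp_n-p^n_h)=:\rho^n_p+\theta^n_p,
\end{split}
\end{equation*}
where $Q_h$ is taken with the pressure $p_n$ in \eqref{Ritz-Qh}. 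By Lemmas \ref{lemma-projection}, \ref{lemma-Ritz-Rh} and \ref{lemma-Ritz-Qh}, together with Lemmas \ref{lemma-regularity-1}--\ref{lemma-regularity-2}, the $\rho$-terms are $O(h)$ in the required norms. Because $(\lambda+G)\|\div\bm u\|_1\lesssim1$, these bounds do not depend on $\lambda$. The same holds for their discrete time derivatives $\T\rho^n$, since the projections commute with $\partial_t$ and $(\lambda+G)^2\int\|\div\partial_t\bm u\|_1^2\lesssim1$. Note that $\bm P_f\bm H_n\in\curl\bm V^e_h$, which is legitimate because $\div\bm H=0$. Hence $\theta_H^n\in\bm V^f_{h,0}$, and by Remark \ref{remark-exact-sequence} it is enough to test the second equation with $\bm B_h\in\curl\bm V^e_h$.

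Next we derive the error equations by subtracting \eqref{discrete-problem} from \eqref{variational-formulation} at $t=t_n$, with continuous test functions replaced by discrete ones. The defining relations of the projections are designed to cancel the stiff terms. In the third equation, \eqref{Ritz-Qh} gives exactly
\begin{equation*}
a_h(\theta^n_u,\bm v_h)-(\theta^n_p,\alpha\div\bm v_h)=0.
\end{equation*}
In the fourth equation, $[\kappa\theta^n_p,q_h]_r$ absorbs $(\kappa\nabla p_n,\nabla q_h)$ by \eqref{Ritz-Rh}. In the second, $[\bm P_f\cdot,\bm B_h]_f=(\cdot,\bm B_h)$ handles the $\mu\partial_t\bm H$ term. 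In the first, the curl part of $\bm P_e$ controls $(\bm H_n,\curl\bm D_h)$ through $\bm P_f$ and the definition of $[\cdot,\cdot]_f$. The remaining consistency residuals fall into three groups:
\begin{itemize}
\item time-truncation terms $\partial_t(\cdot)(t_n)-\T(\cdot)_n$, bounded by $\tau^{1/2}(\int_{t_{n-1}}^{t_n}\|\partial_{tt}\cdot\|^2)^{1/2}$;
\item terms $\T\rho^n$;
\item VEM inconsistency terms, namely $[\cdot,\cdot]_e$ versus $(\cdot,\cdot)$ on $\sigma\bm E$, $\varepsilon\partial_t\bm E$ and $\bm j$, the coupling $L(\nabla p,\bm D_h)$ versus $L\sum_K[\nabla\Pi^K_1p_h,\bm D_h]_{e,K}$, and $[c_0\cdot,\cdot]_m$ versus the $L^2$ product.
\end{itemize}
Each inconsistency term is $O(h)$ times the norm of the test function, obtained by inserting $\bm\Pi^K_0$ and $\Pi^K_1$ and using polynomial consistency of the stabilized forms, with \eqref{estimate-p-PiRhp} for the coupling.

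We then test with $(\theta^n_E,\theta^n_H,\T\theta^n_u,\theta^n_p)$, exactly as in the stability proof. Using the error equation for $\bm u$ at levels $n$ and $n-1$, the terms $(\alpha\T\div\theta_u^n,\theta_p^n)$ and $-(\theta_p^n,\alpha\div\T\theta_u^n)$ cancel. The $L$-coupling is handled by \textbf{A1} as in \eqref{stability-2}, and the telescoping identities \eqref{stability-3} apply. The outcome is a recursion for
\begin{equation*}
\mathcal{E}^n=\varepsilon\||\theta^n_E\||_e^2+\mu\||\theta^n_H\||_f^2+c_0\||\theta^n_p\||_m^2+a_h(\theta^n_u,\theta^n_u),
\end{equation*}
in which the right-hand side consists of the residuals times test norms. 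Summation, Young's inequality and the discrete Gronwall inequality then give $\mathcal{E}^n\lesssim\mathcal{E}^0+(\tau+h)^2$. Here $\mathcal{E}^0\lesssim h^2$, because the initial data $\mathrm{I}_e\bm E_0$, $\mathrm{I}_f\bm H_0$, $\mathrm{I}_s\bm u_0$, $\mathrm{I}_rp_0$ are within $O(h)$ of the projections. For $\bm u$ this uses Lemma \ref{lemma-Ritz-Qh}, including its divergence estimate weighted by $(\lambda+G)$. The triangle inequality finishes the proof.

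The main obstacle is the residual of the $\bm u$-equation when tested with $\T\theta^n_u$, which would require a $\lambda$-uniform bound on $\T\theta_u^n$. We avoid this entirely by building $Q_h$ so that the third error equation is exactly homogeneous. The residual then appears only in the fourth equation, as $(\alpha(\partial_t\div\bm u_n-\T\div Q_h\bm u_n),\theta_p^n)$. We will bound this $\lambda$-robustly via
\begin{equation*}
\|\div(\bm u-Q_h\bm u)\|_0\lesssim\|\div(\bm u-\mathrm{I}_s\bm u)\|_0+\|\div(\mathrm{I}_s\bm u-Q_h\bm u)\|_0,
\end{equation*}
applied to $\partial_t\bm u$. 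This uses $\div\mathrm{I}_s\bm v|_K=\Pi^K_0\div\bm v$ and the $(\lambda+G)$-weighted estimate of Lemma \ref{lemma-Ritz-Qh}, which yields $(\lambda+G)^{1/2}\|\cdot\|_0\lesssim h$ and therefore an $O(h)$ bound independent of $\lambda$. A second subtlety is that $\partial_t\bm u$ and $p$ both vary in time inside $Q_h$, which requires the $p$-regularity in Lemma \ref{lemma-regularity-2}.
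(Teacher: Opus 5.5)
Your proposal follows the paper's proof essentially step by step. It uses the same splittings through $\bm P_e$, $\bm P_f$, $Q_h$ (built with $R_hp_n$) and $R_h$, and the same test functions $(\theta^n_E,\theta^n_H,\T\theta^n_u,\theta^n_p)$. It also relies on the exactly homogeneous displacement error equation to cancel the $\alpha$-coupling, bounds $\T\div(\bm u_n-Q_h\bm u_n)$ $\lambda$-robustly via Lemma \ref{lemma-Ritz-Qh}, and closes with discrete Gronwall from $O(h^2)$ initial errors.

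The one real deviation is that you absorb the discrete $L$-coupling with \textbf{A1}, whereas the paper uses Young's inequality with $\iota$ plus Gronwall. If you keep your choice, you must use that \textbf{A1} is strict, so that the positive multiple $(\kappa-L^2/\sigma)[\theta^n_p,\theta^n_p]_r$ survives. That remainder is needed to absorb the $O(h)|\theta^n_p|_1$ consistency terms coming from $L(\bm E_n,\nabla q_h)$ in the pressure equation, which your residual list omits.
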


\begin{proof}
	We define $\bm E^n_{\pi}|_K = \bm\Pi^K_0\bm E_n$, $\bm H^n_{\pi}|_K = \bm\Pi^K_0\bm H_n$, $\bm u^n_{\pi}|_K = \bm\Pi^{\nabla,K}_1\bm u_n$, $p^n_{\pi}|_K = \Pi^{\nabla,K}_1p_n$. We split each error into two parts as follows:
	\begin{equation*}
		\begin{split}
			&\bm E_n-\bm E^n_h = \bm E_n-\bm P_e\bm E_n + \bm P_e\bm E_n -\bm E^n_h =: \bm\xi_n + \bm\xi^n_h,	\\
			&\bm H_n-\bm H^n_h = \bm H_n-\bm P_f\bm H_n + \bm P_f\bm H_n -\bm H^n_h =: \bm\phi_n + \bm\phi^n_h,	\\
			&\bm u_n-\bm u^n_h = \bm u_n-Q_h\bm u_n + Q_h\bm u_n -\bm u^n_h =: \bm\delta_n + \bm\delta^n_h,	\\
			&p_n-p^n_h = p_n-R_hp_n+R_hp_n-p^n_h =: \eta_n + \eta^n_h.
		\end{split}
	\end{equation*}
	Subtracting \eqref{discrete-problem} from \eqref{variational-formulation} and testing with $\bm D_h\in\bm V^e_h$, we obtain after rearrangement that
	{\allowdisplaybreaks
	\begin{align}
		&[\varepsilon\T\bm\xi^n_h,\bm D_h]_e = [\varepsilon\T(\bm P_e\bm E_n-\bm E^n_{\pi}),\bm D_h]_e + (\varepsilon\T(\bm E^n_{\pi}-\bm E_n),\bm D_h) + (\varepsilon(\T\bm E_n-\partial_t\bm E_n),\bm D_h) \notag\\
		& \quad + (\varepsilon\partial_t\bm E_n,\bm D_h) - [\varepsilon\T\bm E^n_h,\bm D_h]_e \notag\\
		& = [\varepsilon\T(\bm P_e\bm E_n-\bm E^n_{\pi}),\bm D_h]_e + (\varepsilon\T(\bm E^n_{\pi}-\bm E_n),\bm D_h) + (\varepsilon(\T\bm E_n-\partial_t\bm E_n),\bm D_h) \notag\\
		& \quad + [\sigma\bm E^n_h,\bm D_h]_e-(\sigma\bm E_n,\bm D_h) + (\bm H_n,\curl\bm D_h)-[\bm H^n_h,\curl\bm D_h]_f \notag\\
		& \quad + (L\nabla p_n,\bm D_h)-\sum_{K\in\mathcal{T}_h}[L\nabla\Pi^K_1 p^n_h,\bm D_h]_{e,K} + \sum_{K\in\mathcal{T}_h}(\bm j_n-\bm\Pi^K_0\bm j_n,\bm D_h)_K \notag\\
		& =:\sum_{i=1}^{8}\mathrm{I}^E_i(\bm D_h) + [\bm\phi^n_h,\curl\bm D_h]_f.\label{error-E}
	\end{align}
	}
	where
	{\allowdisplaybreaks
	\begin{align*}
		\mathrm{I}^E_1(\bm D_h) &= [\varepsilon\T(\bm P_e\bm E_n-\bm E^n_{\pi}),\bm D_h]_e,\quad\mathrm{I}^E_2(\bm D_h) = (\varepsilon\T(\bm E^n_{\pi}-\bm E_n),\bm D_h),	\\
		\mathrm{I}^E_3(\bm D_h) &= (\varepsilon(\T\bm E_n-\partial_t\bm E_n),\bm D_h),\quad \mathrm{I}^E_4(\bm D_h) = -\sigma[\bm\xi^n_h,\bm D_h]_e,	\\
		\mathrm{I}^E_5(\bm D_h) &= \sigma\Big([\bm P_e\bm E_n-\bm E^n_{\pi},\bm D_h]_e + (\bm E^n_{\pi}-\bm E_n,\bm D_h)\Big),\quad\mathrm{I}^E_6(\bm D_h) = \sum_{K\in\mathcal{T}_h}(\bm j_n-\bm\Pi^K_0\bm j_n,\bm D_h)_K,	\\
		\mathrm{I}^E_7(\bm D_h) &= L\sum_{K\in\mathcal{T}_h}\Big((\nabla p_n-\nabla p^n_{\pi},\bm D_h)_K + [\nabla p^n_{\pi}-\nabla\Pi^K_1 R_hp_n,\bm D_h]_{e,K}\Big),	\\
		\mathrm{I}^E_8(\bm D_h) &= L\sum_{K\in\mathcal{T}_h}[\nabla\Pi^K_1\eta^n_h,\bm D_h]_{e,K}.
	\end{align*}
	}
	For the magnetic field, testing the second equation of \eqref{discrete-problem} with $\bm B_h\in\bm V^f_h$ yields
	{\allowdisplaybreaks
	\begin{align}
		&[\mu\T\bm\phi^n_h,\bm B_h]_f = [\mu\T(\bm P_f\bm H_n-\bm H^n_{\pi}),\bm B_h]_f + (\mu\T(\bm H^n_{\pi}-\bm H_n),\bm B_h) + (\mu(\T\bm H_n-\partial_t\bm H_n),\bm B_h) \notag\\
		& \quad + (\mu\partial_t\bm H_n,\bm B_h) - [\mu\T\bm H^n_h,\bm B_h]_f \notag\\
		& = \sum_{i=1}^{4}\mathrm{I}^H_i(\bm B_h) - [\curl\bm\xi^n_h,\bm B_h]_f,\label{error-H}
	\end{align}
	}
	where
	{\allowdisplaybreaks
	\begin{align*}
		\mathrm{I}^H_1(\bm B_h) &= [\mu\T(\bm P_f\bm H_n-\bm H^n_{\pi}),\bm B_h]_f,\quad\mathrm{I}^H_2(\bm B_h) = (\mu\T(\bm H^n_{\pi}-\bm H_n),\bm B_h),	\\
		\mathrm{I}^H_3(\bm B_h) &= (\mu(\T\bm H_n-\partial_t\bm H_n),\bm B_h),	\\
		\mathrm{I}^H_4(\bm B_h) &= \sum_{K\in\mathcal{T}_h}\Big([\curl(\bm P_e\bm E_n-\bm E^n_{\pi,1}),\bm B_h]_{f,K} + (\curl(\bm E^n_{\pi,1}-\bm E_n),\bm B_h)_K\Big),
	\end{align*}
	}
	and $\bm E^n_{\pi,1}|_K\in\mathbb{P}_1(K,\mathbb{R}^3)$ is the polynomial function that satisfies
	\begin{equation*}
		\|\bm E_n - \bm E^n_{\pi,1}\|_{0,K} + h_K|\bm E_n - \bm E^n_{\pi,1}|_{1,K} \lesssim h^2_K|\bm E_n|_{2,K}.
	\end{equation*}
	For the displacement, testing the elasticity equation with $\bm v_h\in\bm V^s_h$ gives
	\begin{align}
		a_h(\bm\delta^n_h,\bm v_h) - ({\eta}^n_h,\alpha\div\bm v_h) & = a_h(Q_h\bm u_n,\bm v_h) - (R_hp_n,\alpha\div\bm v_h) - a_h(\bm u^n_h,\bm v_h) + (p^n_h,\alpha\div\bm v_h) \notag\\
		& = a(\bm u_n,\bm v_h) - (p_n,\alpha\div\bm v_h) - a_h(\bm u^n_h,\bm v_h) + (p^n_h,\alpha\div\bm v_h) = 0.\label{error-u}
	\end{align}
	Using the identity $\nabla\Pi^K_1 q_h = \Pi^K_0\nabla q_h$ for all $q_h\in V^r_h$ and $K\in\mathcal{T}_h$, and testing the pressure equation with $q_h\in V^r_h$, we obtain
	{\allowdisplaybreaks
	\begin{align}
		&[c_0\T\eta^n_h,q_h]_m + (q_h,\alpha\T\div\bm\delta^n_h) + \kappa[\eta^n_h,q_h]_r = c_0[\T(R_hp_n-p^n_{\pi}),q_h]_m + c_0(\T(p^n_{\pi}-p_n),q_h) \notag\\
		& \quad + c_0(\T p_n-\partial_tp_n,q_h) + c_0(\partial_tp_n,q_h) - c_0[\T p^n_h,q_h]_m + \alpha(q_h,\T\div(Q_h\bm u_n-\bm u_n)) \notag\\
		& \quad + \alpha(q_h,\T\div\bm u_n-\partial_t\div\bm u_n) + \alpha(q_h,\partial_t\div\bm u_n) - \alpha(q_h,\T\div\bm u^n_h)	\notag\\
		& \quad + \kappa(\nabla p_n,\nabla q_h)-\kappa[p^n_h,q_h]_r \notag\\
		& = \sum_{i=1}^{9}\mathrm{I}^p_i(q_h),\label{error-p}
	\end{align}
	}
	where
	{\allowdisplaybreaks
	\begin{align*}
		\mathrm{I}^p_1(q_h) &= c_0[\T(R_hp_n-p^n_{\pi}),q_h]_m,\quad \mathrm{I}^p_2(q_h) = c_0(\T(p^n_{\pi}-p_n),q_h),	\\
		\mathrm{I}^p_3(q_h) &= c_0(\T p_n-\partial_tp_n,q_h),\quad \mathrm{I}^p_4(q_h) = \alpha(q_h,\T\div(Q_h\bm u_n-\bm u_n)),	\\
		\mathrm{I}^p_5(q_h) &= \alpha(q_h,\T\div\bm u_n-\partial_t\div\bm u_n),\quad \mathrm{I}^p_6(q_h) = L\sum_{K\in\mathcal{T}_h}(\bm E_n - \bm E^n_{\pi},\nabla q_h)_K,	\\
		\mathrm{I}^p_7(q_h) &= L\sum_{K\in\mathcal{T}_h}[\bm E^n_{\pi} - \bm P_e\bm E_n,\Pi^K_0\nabla q_h]_{e,K},\quad \mathrm{I}^p_8(q_h) = L\sum_{K\in\mathcal{T}_h}[\bm\xi^n_h,\Pi^K_0\nabla q_h]_{e,K},	\\
		\mathrm{I}^p_9(q_h) &=\sum_{K\in\mathcal{T}_h}(g_n-\Pi^K_1g_n,q_h)_K.
	\end{align*}
	}

	Let $\iota>0$ be a fixed constant. Bounding each term in \eqref{error-E} by the Cauchy--Schwarz inequality and the triangle inequality gives
	{\allowdisplaybreaks
	\begin{align*}
		\tau \mathrm{I}^E_1(\bm D_h) & \lesssim \tau\|\T(\bm P_e\bm E_n-\bm E^n_{\pi})\|^2_0+\tau\||\bm D_h\||^2_e \\
		& \lesssim \tau(\|\T(\bm P_e\bm E_n-\bm E_n)\|^2_0 + \|\T(\bm E_n-\bm E^n_{\pi})\|^2_0)+\tau\||\bm D_h\||^2_e	\\
		& \lesssim \tau h^2\|\T\bm E_n\|^2_2 + \tau h^2\|\T\bm E_n\|^2_1 + \tau\||\bm D_h\||^2_e \\
		& \lesssim h^2\int_{t_{n-1}}^{t_n}\|\partial_t\bm E(\theta)\|^2_2\,\mathrm{d}\theta + \tau\||\bm D_h\||^2_e,	\\
		\tau \mathrm{I}^E_2(\bm D_h) & \lesssim \tau\|\T(\bm E_n-\bm E^n_{\pi})\|^2_0 + \tau\||\bm D_h\||^2_e \lesssim h^2\int_{t_{n-1}}^{t_n}\|\partial_t\bm E(\theta)\|^2_2\,\mathrm{d}\theta + \tau\||\bm D_h\||^2_e,	\\
		\tau \mathrm{I}^E_3(\bm D_h) & \lesssim \tau\|\T\bm E_n-\partial_t\bm E_n\|^2_0 + \tau\||\bm D_h\||^2_e \lesssim \tau^2\int_{t_{n-1}}^{t_n}\|\partial_{tt}\bm E(\theta)\|^2_0\,\mathrm{d}\theta + \tau\||\bm D_h\||^2_e,	\\
		\tau \mathrm{I}^E_4(\bm D_h) & \lesssim \tau \||\bm\xi^n_h\||_e\||\bm D_h\||_e,	\\
		\tau \mathrm{I}^E_5(\bm D_h) & \lesssim \tau(\|\bm E_n-\bm P_e\bm E_n\|^2_0 + \|\bm E_n-\bm E^n_{\pi}\|^2_0) + \tau\||\bm D_h\||^2_e \lesssim \tau h^2\|\bm E_n\|^2_2 + \tau\||\bm D_h\||^2_e,	\\
		\tau \mathrm{I}^E_6(\bm D_h) & \lesssim \tau\sum_{K\in\mathcal{T}_h}\|\bm j_n-\bm\Pi^K_0\bm j_n\|^2_{0,K} + \tau\||\bm D_h\||^2_e \lesssim \tau h^2\|\bm j_n\|^2_1 + \tau\||\bm D_h\||^2_e,	\\
		\tau \mathrm{I}^E_7(\bm D_h) & \lesssim \tau\sum_{K\in\mathcal{T}_h}\Big(|p_n-p^n_{\pi}|^2_{1,K}+|p_n-\Pi^K_1R_hp_n|^2_{1,K}\Big) + \tau\||\bm D_h\||^2_e \lesssim \tau h^2\|p_n\|^2_2 + \tau\||\bm D_h\||^2_e,	\\
		\tau \mathrm{I}^E_8(\bm D_h) & = L\tau\sum_{K\in\mathcal{T}_h}[\Pi^K_0\nabla\eta^n_h,\bm D_h]_{e,K} \leq C(\iota)\tau\||\bm D_h\||^2_e + \iota\tau\kappa|\eta^n_h|^2_1.
	\end{align*}
	}
	The analogous bounds for the magnetic-field residuals \eqref{error-H} read
	{\allowdisplaybreaks
	\begin{align*}
			\tau \mathrm{I}^H_1(\bm B_h) & \lesssim \tau (\|\T(\bm H_n-\bm P_f\bm H_n)\|^2_0+\|\T(\bm H_n-\bm H^n_{\pi})\|^2_0) + \tau\||\bm B_h\||^2_f \\
			& \lesssim \tau h^2\|\T\bm H_n\|^2_1 + \tau\||\bm B_h\||^2_f \lesssim h^2\int_{t_{n-1}}^{t_n}\|\partial_t\bm H(\theta)\|^2_1\,\mathrm{d}\theta + \tau\||\bm B_h\||^2_f,	\\
			\tau \mathrm{I}^H_2(\bm B_h) & \lesssim \tau\|\T(\bm H_n-\bm H^n_{\pi})\|^2_0 + \tau\||\bm B_h\||^2_f\lesssim h^2\int_{t_{n-1}}^{t_n}\|\partial_t\bm H(\theta)\|^2_1\,\mathrm{d}\theta + \tau\||\bm B_h\||^2_f,	\\
			\tau \mathrm{I}^H_3(\bm B_h) & \lesssim \tau\|\T\bm H_n-\partial_t\bm H_n\|^2_0 + \tau\||\bm B_h\||^2_f\lesssim \tau^2\int_{t_{n-1}}^{t_n}\|\partial_{tt}\bm H(\theta)\|^2_0\,\mathrm{d}\theta + \tau\||\bm B_h\||^2_f,	\\
			\tau \mathrm{I}^H_4(\bm B_h) & \lesssim \tau(\|\curl(\bm E_n-\bm P_e\bm E_n)\|^2_0 + \sum_{K\in\mathcal{T}_h}|\bm E_n-\bm E^n_{\pi,1}|^2_{1,K}) + \tau\||\bm B_h\||^2_f \\
			& \lesssim \tau h^2\|\bm E_n\|^2_2 + \tau\||\bm B_h\||^2_f.
	\end{align*}
	}
	For the pressure residuals \eqref{error-p}, we obtain
	{\allowdisplaybreaks
	\begin{align*}
		\tau \mathrm{I}^p_1(q_h) & \lesssim \tau(\|\T(p_n-R_hp_n)\|^2_0+\|\T(p_n-p^n_{\pi})\|^2_0) + \tau\||q_h\||^2_m \\
		& \lesssim \tau h^4\|\T p_n\|^2_2 + \tau\||q_h\||^2_m	\\
		& \lesssim h^4\int_{t_{n-1}}^{t_n}\|\partial_tp(\theta)\|^2_2\,\mathrm{d}\theta + \tau\||q_h\||^2_m,	\\
		\tau \mathrm{I}^p_2(q_h) & \lesssim \tau\|\T(p_n-p^n_{\pi})\|^2_0 + \tau\||q_h\||^2_m \lesssim h^4\int_{t_{n-1}}^{t_n}\|\partial_tp(\theta)\|^2_2\,\mathrm{d}\theta + \tau\||q_h\||^2_m,	\\
		\tau \mathrm{I}^p_3(q_h) & \lesssim \tau\|\T p_n-\partial_tp_n\|^2_0 + \tau\||q_h\||^2_m\lesssim \tau^2\int_{t_{n-1}}^{t_n}\|\partial_{tt}p(\theta)\|^2_0\,\mathrm{d}\theta + \tau\||q_h\||^2_m,	\\
		\tau \mathrm{I}^p_4(q_h) & \lesssim \tau|\T(\bm u_n-Q_h\bm u_n)|^2_1 + \tau\||q_h\||^2_m \\
		& \lesssim \tau h^2\|\T\bm u_n\|^2_2 + \tau h^2(\lambda+G)^2\|\T\div\bm u_n\|^2_1 + \tau h^4\|\T p_n\|^2_2	\\
		& \lesssim h^2\int_{t_{n-1}}^{t_n}\|\partial_t\bm u(\theta)\|^2_2\,\mathrm{d}\theta + h^2(\lambda+G)^2\int_{t_{n-1}}^{t_n}\|\div\partial_t\bm u(\theta) \|^2_1\,\mathrm{d}\theta \\
		& \quad + h^4\int_{t_{n-1}}^{t_n}\|\partial_tp(\theta)\|^2_2\,\mathrm{d}\theta + \tau\||q_h\||^2_m,	\\
		\tau \mathrm{I}^p_5(q_h) & \lesssim \tau\|\T\div\bm u_n-\partial_t\div\bm u_n\|^2_0 + \tau\||q_h\||^2_m \\
		& \lesssim \tau^2\int_{t_{n-1}}^{t_n}\|\partial_{tt}\bm u(\theta)\|^2_1\,\mathrm{d}\theta + \tau\||q_h\||^2_m,	\\
		\tau \mathrm{I}^p_6(q_h) & \leq C(\iota)\tau\|\bm E_n-\bm E^n_{\pi}\|^2_0 + \iota\tau\kappa|q_h|^2_1\leq C(\iota)\tau h^2\|\bm E_n\|^2_1 + \iota\tau\kappa|q_h|^2_1,	\\
		\tau \mathrm{I}^p_7(q_h) & \leq C(\iota)\tau(\|\bm E_n-\bm P_e\bm E_n\|^2_0 + \|\bm E_n-\bm E^n_{\pi}\|^2_0) + \iota\tau\kappa|q_h|^2_1 \\
		& \leq C(\iota)\tau h^2\|\bm E_n\|^2_2 + \iota\tau\kappa|q_h|^2_1,	\\
		\tau \mathrm{I}^p_8(q_h) & \leq C(\iota)\tau\||\bm\xi^n_h\||^2_e + \iota\tau\kappa|q_h|^2_1,	\\
		\tau \mathrm{I}^p_9(q_h) & \lesssim \tau\sum_{K\in\mathcal{T}_h}\|g_n-\Pi^K_1g_n\|^2_{0,K} + \tau\||q_h\||^2_m\lesssim \tau h^4\|g_n\|^2_2 + \tau\||q_h\||^2_m.
	\end{align*}
    }
	We now select the test functions as the discrete error components, namely $(\bm D_h,\bm B_h,\bm v_h,q_h) = (\bm\xi^n_h,\bm\phi^n_h,\T\bm\delta^n_h,\eta^n_h)$, and sum \eqref{error-E}--\eqref{error-p} to obtain
	\begin{align}
		&\tau[\varepsilon\T\bm\xi^n_h,\bm\xi^n_h]_e + \tau[\mu\T\bm\phi^n_h,\bm\phi^n_h]_f + \tau a_h(\bm\delta^n_h,\T\bm\delta^n_h) + \tau[c_0\T\eta^n_h,\eta^n_h]_m + \tau\kappa[\eta^n_h,\eta^n_h]_r \notag\\
		&= \sum_{i=1}^{8}\tau \mathrm{I}^E_i(\bm\xi^n_h) + \sum_{i=1}^{4}\tau \mathrm{I}^H_i(\bm\phi^n_h) + \sum_{i=1}^{9}\tau \mathrm{I}^p_i(\eta^n_h).\label{error-all-1}
	\end{align}
	By the coercivity of $[\cdot,\cdot]_r$ and the discrete estimate analogous to \eqref{stability-3}, we deduce
	{\allowdisplaybreaks
	\begin{align*}
		&\tau[\T\bm\xi^n_h,\bm\xi^n_h]_e \geq\frac{1}{2}\||\bm\xi^n_h\||^2_e - \frac{1}{2}\||\bm\xi^{n-1}_h\||^2_e,\label{error-all-2-1}\\
		&\tau[\T\bm\phi^n_h,\bm\phi^n_h]_f \geq \frac{1}{2}\||\bm\phi^n_h\||^2_f - \frac{1}{2}\||\bm\phi^{n-1}_h\||^2_f,\\
		&\tau(\T\div\bm\delta^n_h,\div\bm\delta^n_h)\geq\frac{1}{2}\|\div\bm\delta^n_h\|^2_0 - \frac{1}{2}\|\div\bm\delta^{n-1}_h\|^2_0,\\
		&\tau[\T\bm\delta^n_h,\bm\delta^n_h]_s\geq\frac{1}{2}\||\bm\delta^n_h\||^2_s-\frac{1}{2}\||\bm\delta^{n-1}_h\||^2_s,\\
		&\tau[\T\eta^n_h,\eta^n_h]_m\geq \frac{1}{2}\||\eta^n_h\||^2_m - \frac{1}{2}\||\eta^{n-1}_h\||^2_m,\\
		&\tau\kappa[\eta^n_h,\eta^n_h]_r\geq r_*\tau\kappa|\eta^n_h|^2_1,
	\end{align*}
	}
	where $r_*>0$ is a positive constant. Choosing $\iota=r_*/4$ and inserting the above estimates together with \eqref{error-all-1} and Lemmas \ref{lemma-regularity-1}--\ref{lemma-regularity-2}, we arrive at
	\begin{align*}
		&\varepsilon\||\bm\xi^n_h\||^2_e + \mu\||\bm\phi^n_h\||^2_f + (\lambda+G)\|\div\bm\delta^n_h\|^2_0 + G\||\bm\delta^n_h\||^2_s + c_0\||\eta^n_h\||^2_m	\\
		&\lesssim \varepsilon\||\bm\xi^0_h\||^2_e + \mu\||\bm\phi^0_h\||^2_f + (\lambda+G)\|\div\bm\delta^0_h\|^2_0 + G\||\bm\delta^0_h\||^2_s + c_0\||\eta^0_h\||^2_m	\\
		&\quad + \tau^2 + h^2 + \tau\sum_{i=1}^{n}(\||\bm\xi^i_h\||^2_e+\||\bm\phi^i_h\||^2_f+\||\bm\delta^i_h\||^2_s+\||\eta^i_h\||^2_m).
	\end{align*}
	It remains to bound the initial errors. By Lemmas \ref{lemma-interpolation} and \ref{lemma-projection}, we have
	\begin{equation*}
		\varepsilon\||\bm\xi^0_h\||^2_e\lesssim \|\bm P_e\bm E_0 - \mathrm{I}_e\bm E_0\|^2_0\lesssim \|\bm E_0-\bm P_e\bm E_0\|^2_0 + \|\bm E_0-\mathrm{I}_e\bm E_0\|^2_0\lesssim h^2\|\bm E_0\|^2_2.
	\end{equation*}
	Similarly, combining Lemmas \ref{lemma-interpolation}, \ref{lemma-projection}, \ref{lemma-Ritz-Qh}, and \ref{lemma-Ritz-Rh} yields
	\begin{equation*}
		\begin{split}
			&\mu\||\bm\phi^0_h\||^2_f\lesssim \|\bm P_f\bm H_0 - \mathrm{I}_f\bm H_0\|^2_0\lesssim h^2\|\bm H_0\|^2_1,	\\
			&(\lambda+G)\|\div\bm\delta^0_h\|^2_0 = (\lambda+G)\|\div(Q_h\bm u_0-\mathrm{I}_s\bm u_0)\|^2_0 \\
			&\qquad\qquad\qquad \lesssim h^2\|\bm u_0\|^2_2 + h^2(\lambda+G)^2\|\div\bm u_0\|^2_1 + h^4\|p_0\|^2_2,	\\
			&G\||\bm\delta^0_h\||^2_s\lesssim |Q_h\bm u_0-\mathrm{I}_s\bm u_0|^2_1 \\
			&\qquad\quad \lesssim h^2\|\bm u_0\|^2_2 + h^2(\lambda+G)^2\|\div\bm u_0\|^2_1 + h^4\|p_0\|^2_2,	\\
			&c_0\||\eta^0_h\||^2_m \lesssim \|R_hp_0-\mathrm{I}_rp_0\|^2_0\lesssim h^4\|p_0\|^2_2.
		\end{split}
	\end{equation*}
	Applying the discrete Grönwall inequality together with Lemmas \ref{lemma-regularity-1} and \ref{lemma-regularity-2} gives
	\begin{equation*}
		\begin{split}
			&\||\bm\xi^n_h\||^2_e + \||\bm\phi^n_h\||^2_f + \||\bm\delta^n_h\||^2_s + \||\eta^n_h\||^2_m \\
			&\lesssim \tau^2+h^2 + \tau\sum_{i=1}^{n}(\||\bm\xi^i_h\||^2_e+\||\bm\phi^i_h\||^2_f+\||\bm\delta^i_h\||^2_s+\||\eta^i_h\||^2_m) \\
			&\lesssim \tau^2+h^2,
		\end{split}
	\end{equation*}
	which, after absorbing the $\tau$-weighted sums, yields
	\begin{equation*}
		\|\bm\xi^n_h\|_0 + \|\bm\phi^n_h\|_0 + |\bm\delta^n_h|_1 + \|\eta^n_h\|_0 \lesssim \tau + h.
	\end{equation*}
	Finally, the triangle inequality combines the projection and discrete parts to give the asserted estimate.
\end{proof}

\section{Numerical experiments}

In this section, we present several numerical examples to verify the theoretical findings and demonstrate the locking-free performance of the proposed VEM. The computational domain is fixed as the unit cube $\Omega = (0,1)^3$, and the final time is chosen as $T = 0.1$. Let $N=32$, and take the time step $\tau=T/N$ so that the spatial discretization error dominates the total error. Three families of polyhedral meshes are considered: CVT meshes, prism meshes, and cube meshes (see Fig.~\ref{meshes3D}).

For the convergence study we construct the following exact solution:
{\allowdisplaybreaks
\begin{align*}
	\bm E(x,y,z,t) &= \sin t\,
	\begin{pmatrix}
		\sin(\pi x)\sin(\pi y)\sin(\pi z) \\
		\sin(\pi x)\sin(\pi y)\sin(\pi z) \\
		\sin(\pi x)\sin(\pi y)\sin(\pi z)
	\end{pmatrix},\\
	\bm H(x,y,z,t) &= \pi\cos t\,
	\begin{pmatrix}
		\sin(\pi x)\cos(\pi y)\sin(\pi z) - \sin(\pi x)\sin(\pi y)\cos(\pi z)\\
		\sin(\pi x)\sin(\pi y)\cos(\pi z) - \cos(\pi x)\sin(\pi y)\sin(\pi z)\\
		\cos(\pi x)\sin(\pi y)\sin(\pi z) - \sin(\pi x)\cos(\pi y)\sin(\pi z)
	\end{pmatrix},\\
	\bm u(x,y,z,t) &= 50\,x(1-x)y(1-y) z(1-z)\,e^{-t}\,
	\begin{pmatrix}
		x(1-x)\,\Big((1-2y)z(1-z) - y(1-y)(1-2z)\Big)\\
		y(1-y)\,\Big(x(1-x)(1-2z) - (1-2x)z(1-z)\Big)\\
		z(1-z)\,\Big((1-2x)y(1-y) - x(1-x)(1-2y)\Big)
	\end{pmatrix},\\
	p(x,y,z,t) &= e^{-t}\,\sin(\pi x)\sin(\pi y)\sin(\pi z).
\end{align*}
}

\begin{figure}[!htb]%[H]
	\centering
	\subfigure[CVT mesh]
	{\includegraphics[scale=0.45]{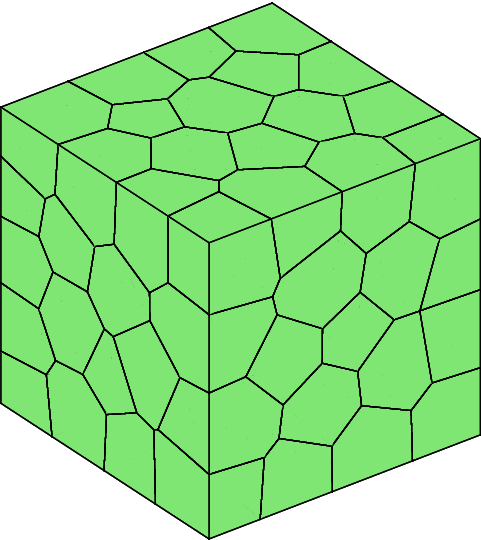}}
	\hspace{0.5cm}
	\subfigure[Prism mesh]
	{\includegraphics[scale=0.45]{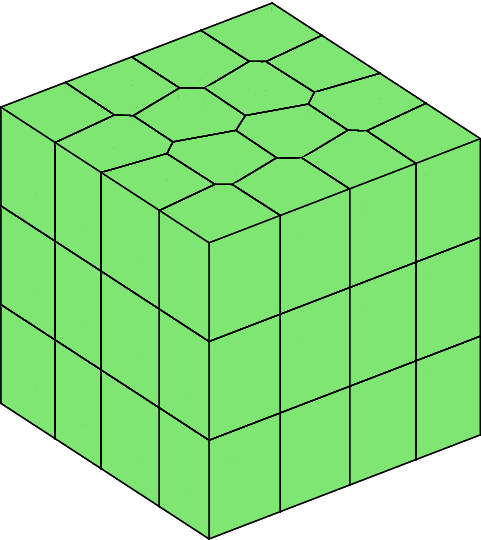}}
	\hspace{0.5cm}
	\subfigure[Cube mesh]
	{\includegraphics[scale=0.45]{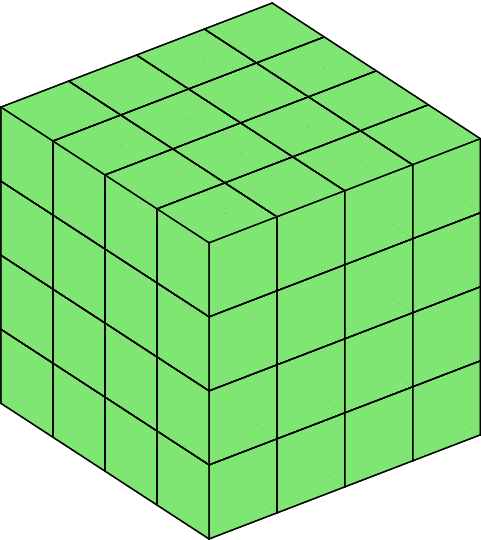}}
	\caption{Different types of meshes in three dimensions.}
	\label{meshes3D}
\end{figure}

All the parameters in the model are set as $\varepsilon = 1$, $\sigma = 1$, $\mu = 1$, $L = 1$, $G = 1$, $\alpha = 1$, $c_0 = 1$, $\kappa = 2$. The source terms $\bm j$, $\bm f$ and $g$ are computed accordingly by substituting the above expressions into \eqref{continuous-problem}.

Let $(\bm E_h^N,\bm H_h^N,\bm u_h^N,p_h^N)$ be the fully discrete VEM solution of \eqref{discrete-problem} at time $t_N = T$. The computable error indicators are defined as
{\allowdisplaybreaks
\begin{align*}
	Err_u &= \Big(\sum_{K\in\mathcal{T}_h}\bigl|\bm u(\cdot,T) - \bm\Pi^{\nabla,K}_1\bm u_h^N\bigr|_{1,K}^2\Big)^{1/2},\quad Err_p = \Big(\sum_{K\in\mathcal{T}_h}\bigl\|p(\cdot,T) - \Pi^{\nabla,K}_1 p_h^N\bigr\|_{0,K}^2\Big)^{1/2},\\
	Err_E &= \Big(\sum_{K\in\mathcal{T}_h}\bigl\|\bm E(\cdot,T) - \bm\Pi^K_0\bm E_h^N\bigr\|_{0,K}^2\Big)^{1/2},\quad Err_H = \Big(\sum_{K\in\mathcal{T}_h}\bigl\|\bm H(\cdot,T) - \bm\Pi^K_0\bm H_h^N\bigr\|_{0,K}^2\Big)^{1/2}.
\end{align*}
}

Tab.~\ref{Tab-Prism}--\ref{Tab-cube} display the errors and the estimated convergence rates under different mesh sizes $h$ for three representative values of the Lam\'e constant: $\lambda = 1$, $\lambda = 10^2$, and $\lambda = 10^5$. As $\lambda$ grows, the material becomes nearly incompressible. It is observed that all errors decrease steadily with mesh refinement, irrespective of the magnitude of $\lambda$. Moreover, the displacement and pressure errors remain uniformly bounded and exhibit no visible deterioration as $\lambda$ increases, which confirms the locking-free nature of the proposed VEM.

\begin{table}[!htb]
	\centering
	\renewcommand{\arraystretch}{1.1}
	\setlength{\tabcolsep}{5pt}
	\caption{The error results of the VEM with $\lambda = 1, 10^2, 10^5$ on the Prism meshes.}
	\begin{tabular}{lccccccccc}
		\hline
		\multicolumn{1}{c}{$\lambda$}          & $h$    & $Err_E$   & Rate & $Err_H$   & Rate & $Err_u$   & Rate & $Err_p$   & Rate \\ \hline
		\multicolumn{1}{c}{\multirow{4}{*}{$1$}} & 0.34 & 7.4565E-02 & -    & 7.7435E-01 & -    & 5.5896E-02 & -    & 6.9114E-02 & -    \\
		\multicolumn{1}{c}{}                   & 0.24 & 5.9194E-02 & 0.68 & 5.5756E-01 & 0.97 & 3.9557E-02 & 1.02 & 3.8813E-02 & 1.70 \\
		\multicolumn{1}{c}{}                   & 0.17 & 4.2001E-02 & 1.03 & 3.9174E-01 & 1.06 & 2.6810E-02 & 1.16 & 2.0437E-02 & 1.92 \\
		\multicolumn{1}{c}{}                   & 0.12 & 2.6641E-02 & 1.18 & 2.7088E-01 & 0.95 & 1.7918E-02 & 1.04 & 9.9828E-03 & 1.85 \\ \hline
		\multirow{4}{*}{$10^2$}                   & 0.34 & 7.4586E-02 & -    & 7.7435E-01 & -    & 5.3777E-02 & -    & 6.9285E-02 & -    \\
		& 0.24 & 5.9222E-02 & 0.68 & 5.5756E-01 & 0.97 & 3.8253E-02 & 1.01 & 3.8868E-02 & 1.71 \\
		& 0.17 & 4.2021E-02 & 1.03 & 3.9174E-01 & 1.06 & 2.6191E-02 & 1.13 & 2.0468E-02 & 1.92 \\
		& 0.12 & 2.6654E-02 & 1.18 & 2.7088E-01 & 0.95 & 1.7669E-02 & 1.02 & 1.0002E-02 & 1.85 \\ \hline
		\multirow{4}{*}{$10^5$}                   & 0.34 & 7.4587E-02 & -    & 7.7435E-01 & -    & 5.3755E-02 & -    & 6.9292E-02 & -    \\
		& 0.24 & 5.9223E-02 & 0.68 & 5.5756E-01 & 0.97 & 3.8243E-02 & 1.01 & 3.8869E-02 & 1.71 \\
		& 0.17 & 4.2022E-02 & 1.03 & 3.9174E-01 & 1.06 & 2.6186E-02 & 1.13 & 2.0469E-02 & 1.92 \\
		& 0.12 & 2.6655E-02 & 1.18 & 2.7088E-01 & 0.95 & 1.7667E-02 & 1.02 & 1.0003E-02 & 1.85 \\ \hline
	\end{tabular}\label{Tab-Prism}
\end{table}

\begin{table}[!htb]
	\centering
	\renewcommand{\arraystretch}{1.1}
	\setlength{\tabcolsep}{5pt}
	\caption{The error results of the VEM with $\lambda = 1, 10^2, 10^5$ on the CVT meshes.}
	\begin{tabular}{lccccccccc}
		\hline
		\multicolumn{1}{c}{$\lambda$}          & $h$    & $Err_E$   & Rate & $Err_H$   & Rate & $Err_u$   & Rate & $Err_p$   & Rate \\ \hline
		\multicolumn{1}{c}{\multirow{4}{*}{$1$}} & 0.41 & 8.4191E-02 & -    & 9.4584E-01 & -    & 6.6896E-02 & -    & 8.8120E-02 & -    \\
		\multicolumn{1}{c}{}                   & 0.30 & 7.0994E-02 & 0.56 & 7.1874E-01 & 0.91 & 5.3342E-02 & 0.75 & 6.4048E-02 & 1.06 \\
		\multicolumn{1}{c}{}                   & 0.20 & 5.5101E-02 & 0.63 & 4.8034E-01 & 1.00 & 3.4061E-02 & 1.12 & 2.9792E-02 & 1.91 \\
		\multicolumn{1}{c}{}                   & 0.15 & 3.6844E-02 & 1.30 & 3.3999E-01 & 1.12 & 2.3135E-02 & 1.25 & 1.4845E-02 & 2.25 \\ \hline
		\multirow{4}{*}{$10^2$}                   & 0.41 & 8.4201E-02 & -    & 9.4584E-01 & -    & 6.4547E-02 & -    & 8.8416E-02 & -    \\
		& 0.30 & 7.1019E-02 & 0.56 & 7.1874E-01 & 0.91 & 5.1297E-02 & 0.76 & 6.4179E-02 & 1.06 \\
		& 0.20 & 5.5127E-02 & 0.63 & 4.8034E-01 & 1.00 & 3.3122E-02 & 1.09 & 2.9835E-02 & 1.91 \\
		& 0.15 & 3.6860E-02 & 1.30 & 3.3999E-01 & 1.12 & 2.2739E-02 & 1.22 & 1.4872E-02 & 2.25 \\ \hline
		\multirow{4}{*}{$10^5$}                   & 0.41 & 8.4202E-02 & -    & 9.4584E-01 & -    & 6.4514E-02 & -    & 8.8429E-02 & -    \\
		& 0.30 & 7.1020E-02 & 0.56 & 7.1874E-01 & 0.91 & 5.1282E-02 & 0.76 & 6.4184E-02 & 1.06 \\
		& 0.20 & 5.5128E-02 & 0.63 & 4.8034E-01 & 1.00 & 3.3117E-02 & 1.09 & 2.9836E-02 & 1.91 \\
		& 0.15 & 3.6861E-02 & 1.30 & 3.3999E-01 & 1.12 & 2.2737E-02 & 1.22 & 1.4873E-02 & 2.25 \\ \hline
	\end{tabular}\label{Tab-CVT}
\end{table}

\begin{table}[!htb]
	\centering
	\renewcommand{\arraystretch}{1.1}
	\setlength{\tabcolsep}{5pt}
	\caption{The error results of the VEM with $\lambda = 1, 10^2, 10^5$ on the cube meshes.}
	\begin{tabular}{lccccccccc}
		\hline
		\multicolumn{1}{c}{$\lambda$}          & $h$    & $Err_E$   & Rate & $Err_H$   & Rate & $Err_u$   & Rate & $Err_p$   & Rate \\ \hline
		\multicolumn{1}{c}{\multirow{4}{*}{$1$}} & 0.29 & 6.7884E-02 & -    & 7.0213E-01 & -    & 5.0650E-02 & -    & 5.8892E-02 & -    \\
		\multicolumn{1}{c}{}                   & 0.22 & 5.5418E-02 & 0.71 & 5.2925E-01 & 0.98 & 3.7367E-02 & 1.06 & 3.6194E-02 & 1.69 \\
		\multicolumn{1}{c}{}                   & 0.11 & 2.4107E-02 & 1.20 & 2.6569E-01 & 0.99 & 1.7500E-02 & 1.09 & 9.9130E-03 & 1.87 \\
		\multicolumn{1}{c}{}                   & 0.09 & 1.7101E-02 & 1.54 & 2.1261E-01 & 1.00 & 1.3802E-02 & 1.06 & 6.4177E-03 & 1.95 \\ \hline
		\multirow{4}{*}{$10^2$}                   & 0.29 & 6.7902E-02 & -    & 7.0213E-01 & -    & 4.8773E-02 & -    & 5.9012E-02 & -    \\
		& 0.22 & 5.5441E-02 & 0.70 & 5.2925E-01 & 0.98 & 3.6184E-02 & 1.04 & 3.6242E-02 & 1.69 \\
		& 0.11 & 2.4120E-02 & 1.20 & 2.6569E-01 & 0.99 & 1.7266E-02 & 1.07 & 9.9314E-03 & 1.87 \\
		& 0.09 & 1.7110E-02 & 1.54 & 2.1261E-01 & 1.00 & 1.3674E-02 & 1.05 & 6.4310E-03 & 1.95 \\ \hline
		\multirow{4}{*}{$10^5$}                   & 0.29 & 6.7903E-02 & -    & 7.0213E-01 & -    & 4.8751E-02 & -    & 5.9016E-02 & -    \\
		& 0.22 & 5.5441E-02 & 0.70 & 5.2925E-01 & 0.98 & 3.6172E-02 & 1.04 & 3.6243E-02 & 1.69 \\
		& 0.11 & 2.4120E-02 & 1.20 & 2.6569E-01 & 0.99 & 1.7264E-02 & 1.07 & 9.9318E-03 & 1.87 \\
		& 0.09 & 1.7110E-02 & 1.54 & 2.1261E-01 & 1.00 & 1.3673E-02 & 1.05 & 6.4313E-03 & 1.95 \\ \hline
	\end{tabular}\label{Tab-cube}
\end{table}

Finally, we examine numerically the divergence-free property of the discrete magnetic field, which is an important constraint of the model. Recall that the continuous magnetic field satisfies $\div\bm H(t)=0$ for all $t\in[0,T]$. This constraint is inherited by the fully discrete scheme. In fact, since $\curl\bm V^e_h = \bm V^f_{h,0}$ (see Remark \ref{remark-exact-sequence}), taking the divergence on both sides of \eqref{Hn=Hn-1-En} yields
\begin{equation*}
	\div\bm H^n_h = \div\bm H^{n-1}_h - \tau\mu^{-1}\div(\curl\bm E^n_h) = \div\bm H^{n-1}_h,
\end{equation*}
and hence $\div\bm H^n_h = \div\bm H^0_h$ for all $1\leq n\leq N$. Therefore, the discrete magnetic field is divergence-free at every time level provided that the initial datum $\bm H^0_h$ is divergence-free. In our implementation, we observe that $\bm H = \curl\bm W$ with the vector potential
\begin{equation*}
	\bm W(x,y,z,t) = \cos t\,\sin(\pi x)\sin(\pi y)\sin(\pi z)
	\begin{pmatrix}
		1\\ 1\\ 1
	\end{pmatrix}.
\end{equation*}
Thus, instead of evaluating the face moments of $\mathrm{I}_f\bm H_0$ by numerical quadrature, which yields the same function in exact arithmetic, we set $\bm H^0_h = \curl(\mathrm{I}_e\bm W_0)$ with $\bm W_0 = \bm W(\cdot,0)$, which belongs to $\bm V^f_{h,0}$ by Remark \ref{remark-exact-sequence}. In this way $\div\bm H^0_h = 0$ holds up to round-off error only; a direct quadrature of the face moments would, in contrast, introduce an additional error depending on the accuracy of the quadrature rule, which would then pollute $\div\bm H^n_h$ for all subsequent time levels.

To verify this property, we compute
\begin{equation*}
	Err_{\div H} = \Bigg(\sum_{K\in\mathcal{T}_h}\|\div\bm H^N_h\|^2_{0,K}\Bigg)^{1/2}
	= \Bigg(\sum_{K\in\mathcal{T}_h}|K|^{-1}\Big(\sum_{F\in\mathcal{F}(K)}\int_F\bm H^N_h\cdot\bm n_F\,\mathrm{d}S\Big)^2\Bigg)^{1/2}.
\end{equation*}

\begin{table}[!htb]
	\centering
	\renewcommand{\arraystretch}{1.1}
	\setlength{\tabcolsep}{5pt}
	\caption{$Err_{\div H}$ of the VEM with $\lambda = 1, 10^2, 10^5$ on different meshes.}
	\begin{tabular}{clccc}
		\hline
		Mesh  & $h$    & $\lambda = 1$ & $\lambda = 10^2$ & $\lambda = 10^5$ \\ \hline
		\multirow{4}{*}{Prism} & 0.34 & 5.2096E-15 & 6.1898E-15 & 5.2810E-15 \\
		& 0.24 & 1.0409E-14 & 9.7962E-15 & 9.6886E-15 \\
		& 0.17 & 1.7277E-14 & 1.7888E-14 & 1.7853E-14 \\
		& 0.12 & 3.0579E-14 & 3.0941E-14 & 3.1165E-14 \\ \hline
		\multirow{4}{*}{CVT}   & 0.41 & 5.7707E-15 & 5.2935E-15 & 5.3269E-15 \\
		& 0.30 & 7.7573E-15 & 8.3690E-15 & 8.4523E-15 \\
		& 0.20 & 1.7340E-14 & 1.7412E-14 & 1.7574E-14 \\
		& 0.15 & 3.5581E-14 & 3.5670E-14 & 3.5478E-14 \\ \hline
		\multirow{4}{*}{Cube}  & 0.29 & 6.2329E-15 & 6.8460E-15 & 7.1920E-15 \\
		& 0.22 & 1.0317E-14 & 9.4255E-15 & 9.3699E-15 \\
		& 0.11 & 1.8710E-14 & 1.8741E-14 & 1.8933E-14 \\
		& 0.09 & 3.2826E-14 & 3.2651E-14 & 3.3079E-14 \\ \hline
	\end{tabular}\label{Tab-divH}
\end{table}

Tab.~\ref{Tab-divH} reports the values of $Err_{\div H}$ at the final time $t_N = T$ for the three families of meshes and for $\lambda = 1$, $10^2$, $10^5$. All the values are of the order $10^{-15}$--$10^{-14}$, i.e. they stay at the level of machine precision, and they exhibit no deterioration as $\lambda$ increases from $1$ to $10^5$. The mild growth with respect to mesh refinement is merely the accumulation of round-off errors in the summation over the elements. These results confirm that the magnetic field produced by the proposed VEM is divergence-free at the discrete level, which agrees well with the theoretical prediction.

\section*{Declarations}
\begin{enumerate}
	\item {\bf Conflicts of interest/Competing interests.} The authors have no known competing financial interests or personal relationships that could have appeared to influence the work reported in this manuscript.
	
	\item {\bf Availability of data and material.} The datasets generated during and/or analysed during the current study are available from the corresponding author on reasonable request.
	
	\item {\bf Authors' contributions.} All authors contributed equally to this manuscript.
\end{enumerate}

\section*{Acknowledgement}
The work was supported by NSFC (Grant no. 12571390).

\section*{References}

\bibliography{reference}

\end{document}